\let\origsection=\section \def\section{\@ifstar{\origsection*}{\mysection}}
\def\mysection{\@startsection{section}{1}\z@{.7\linespacing\@plus\linespacing}{.5\linespacing}{\normalfont\scshape\centering\S}}
\renewcommand{\PrintDOI}[1]{\doi{#1}}
\numberwithin{equation}{section}
\numberwithin{figure}{section}
\def\rmlabel{\upshape({\itshape \roman*\,})}
\def\alabel{\upshape({\itshape \alph*\,})}
\def\nlabel{\upshape({\itshape \arabic*\,})}
\let\polishlcross=\l
\def\l{\ifmmode\ell\else\polishlcross\fi}
\def\qand{\quad\text{and}\quad}
\def\qqand{\qquad\text{and}\qquad}
\let\emptyset=\varnothing
\let\setminus=\smallsetminus
\def\moverlay{\mathpalette\mov@rlay}
\def\mov@rlay#1#2{\leavevmode\vtop{   \baselineskip\z@skip \lineskiplimit-\maxdimen
   \ialign{\hfil$\m@th#1##$\hfil\cr#2\crcr}}}
\newcommand{\charfusion}[3][\mathord]{
    #1{\ifx#1\mathop\vphantom{#2}\fi
        \mathpalette\mov@rlay{#2\cr#3}
      }
    \ifx#1\mathop\expandafter\displaylimits\fi}
\newcommand{\dcup}{\charfusion[\mathbin]{\cup}{\cdot}}
\newcommand{\bigdcup}{\charfusion[\mathop]{\bigcup}{\cdot}}
\DeclareFontFamily{U}  {MnSymbolC}{}
\DeclareSymbolFont{MnSyC}         {U}  {MnSymbolC}{m}{n}
\DeclareFontShape{U}{MnSymbolC}{m}{n}{
    <-6>  MnSymbolC5
   <6-7>  MnSymbolC6
   <7-8>  MnSymbolC7
   <8-9>  MnSymbolC8
   <9-10> MnSymbolC9
  <10-12> MnSymbolC10
  <12->   MnSymbolC12}{}
\DeclareMathSymbol{\powerset}{\mathord}{MnSyC}{180}
\DeclareFontFamily{U}{MnSymbolA}{}
\DeclareFontShape{U}{MnSymbolA}{m}{n}{
    <-6>  MnSymbolA5
   <6-7>  MnSymbolA6
   <7-8>  MnSymbolA7
   <8-9>  MnSymbolA8
   <9-10> MnSymbolA9
  <10-12> MnSymbolA10
  <12->   MnSymbolA12}{}
\DeclareFontShape{U}{MnSymbolA}{b}{n}{
    <-6>  MnSymbolA-Bold5
   <6-7>  MnSymbolA-Bold6
   <7-8>  MnSymbolA-Bold7
   <8-9>  MnSymbolA-Bold8
   <9-10> MnSymbolA-Bold9
  <10-12> MnSymbolA-Bold10
  <12->   MnSymbolA-Bold12}{}
\DeclareSymbolFont{MnSyA}{U}{MnSymbolA}{m}{n}
\DeclareRobustCommand{\overleftharpoon}{\mathpalette{\overarrow@\leftharpoonfill@}}
\DeclareRobustCommand{\overrightharpoon}{\mathpalette{\overarrow@\rightharpoonfill@}}
\def\leftharpoonfill@{\arrowfill@\leftharpoondown\mn@relbar\mn@relbar}
\def\rightharpoonfill@{\arrowfill@\mn@relbar\mn@relbar\rightharpoonup}
\DeclareMathSymbol{\leftharpoondown}{\mathrel}{MnSyA}{'112}
\DeclareMathSymbol{\rightharpoonup}{\mathrel}{MnSyA}{'100}
\DeclareMathSymbol{\mn@relbar}{\mathrel}{MnSyA}{'320}
\let\overrightarrow=\overrightharpoon
\let\epsilon=\varepsilon
\let\eps=\epsilon
\let\rho=\varrho
\let\theta=\vartheta
\let\phi=\varphi
\def\NN{{\mathds N}}
\newcommand{\cA}{\mathcal{A}}
\newcommand{\cK}{\mathcal{K}}
\newcommand{\cP}{\mathcal{P}}
\newcommand{\ccQ}{\mathscr{Q}}
\theoremstyle{plain}
\newtheorem{thm}{Theorem}[section]
\newtheorem{cor}[thm]{Corollary}
\newtheorem{lemma}[thm]{Lemma}
\theoremstyle{definition}
\newtheorem{dfn}[thm]{Definition}
\newtheorem{exmp}[thm]{Example}
\newtheorem{conj}[thm]{Conjecture}
\def\bl{\bigl(}
\def\br{\bigr)}
\DeclareMathOperator{\ex}{ex}
\newsavebox\vdegbox
\savebox\vdegbox{\tikz{
		\draw[black,fill=black] (90:1) circle (.35);
		\draw[black,line width=0.10cm] (210:1) circle (.30);
		\draw[black,line width=0.10cm] (330:1) circle (.30);
		\draw[opacity=0] (0:1.2) circle (0.1);
	}}
\newsavebox\vvbox
\savebox\vvbox{\tikz{
		\draw[black,line width=0.10cm] (90:1) circle (.30);
		\draw[black,fill=black] (210:1) circle (.35);
		\draw[black,fill=black] (330:1) circle (.35);
		\draw[opacity=0] (0:1.2) circle (0.1);
	}}
\newsavebox\pdegbox
\savebox\pdegbox{\tikz{
		\draw[black,line width=0.10cm] (90:1) circle (.30);
		\draw[black,fill=black] (210:1) circle (.35);
		\draw[black,fill=black] (330:1) circle (.35);
		\draw[black,line width=0.28cm ] (210:1) -- (330:1);
		\draw[opacity=0] (0:1.2) circle (0.1);
	}}
\newsavebox\vvvbox
\savebox\vvvbox{\tikz{
		\draw[black,fill=black] (90:1) circle (.35);
		\draw[black,fill=black] (210:1) circle (.35);
		\draw[black,fill=black] (330:1) circle (.35);
		\draw[opacity=0] (0:1.2) circle (0.1);
	}}
\newcommand{\vvv}{\mathord{\scaleobj{1.2}{\scalerel*{\usebox{\vvvbox}}{x}}}}
\newcommand{\pivvv}{\pi_{\vvv}}
\newsavebox\evbox
\savebox\evbox{\tikz{
		\draw[black,fill=black] (90:1) circle (.35);
		\draw[black,fill=black] (210:1) circle (.35);
		\draw[black,fill=black] (330:1) circle (.35);
		\draw[black,line width=0.28cm ] (210:1) -- (330:1);
		\draw[opacity=0] (0:1.2) circle (0.1);
	}}
\newcommand{\ev}{\mathord{\scaleobj{1.2}{\scalerel*{\usebox{\evbox}}{x}}}}	
\newcommand{\piev}{\pi_{\ev}}
\newsavebox\eebox
\savebox\eebox{\tikz{
		\draw[black,fill=black] (90:1) circle (.35);
		\draw[black,fill=black] (210:1) circle (.35);
		\draw[black,fill=black] (330:1) circle (.35);
		\draw[black,line width=0.28cm ] (90:1) -- (330:1);
		\draw[black,line width=0.28cm ] (90:1) -- (210:1);
		\draw[opacity=0] (0:1.2) circle (0.1);
	}}
\newcommand{\ee}{\mathord{\scaleobj{1.2}{\scalerel*{\usebox{\eebox}}{x}}}}
\newcommand{\piee}{\pi_{\ee}}
\newsavebox\eeebox
\savebox\eeebox{\tikz{
		\draw[black,fill=black] (90:1) circle (.35);
		\draw[black,fill=black] (210:1) circle (.35);
		\draw[black,fill=black] (330:1) circle (.35);
		\draw[black,line width=0.28cm ] (90:1) -- (330:1);
		\draw[black,line width=0.28cm ] (90:1) -- (210:1);
		\draw[black,line width=0.28cm ] (210:1) -- (330:1);
		\draw[opacity=0] (0:1.2) circle (0.1);
	}}
\newcommand{\Qvvv}{\ccQ^{(3)}_{\vvv}}
\newcommand{\Qev}{\ccQ^{(3)}_{\ev}}
\newcommand{\Qee}{\ccQ^{(3)}_{\ee}}
\begin{document}
\title[On a Tur\'an problem in weakly quasirandom $3$-uniform hypergraphs]{On a Tur\'an problem in weakly quasirandom $3$-uniform hypergraphs}

\author[Christian Reiher]{Christian Reiher}
\address{Fachbereich Mathematik, Universit\"at Hamburg, Hamburg, Germany}
\email{Christian.Reiher@uni-hamburg.de}

\author[Vojt\v{e}ch R\"{o}dl]{Vojt\v{e}ch R\"{o}dl}
\address{Department of Mathematics and Computer Science, 
Emory University, Atlanta, USA}
\thanks{The second author was supported by NSF grants DMS 1301698 and 1102086.}
\email{rodl@mathcs.emory.edu}

\author[Mathias Schacht]{Mathias Schacht}
\address{Fachbereich Mathematik, Universit\"at Hamburg, Hamburg, Germany}
\thanks{The third author was supported through the Heisenberg-Programme of the DFG\@.}
\email{schacht@math.uni-hamburg.de}

\keywords{quasirandom hypergraphs, extremal graph theory, Tur\'an's problem}
\subjclass[2010]{05C35 (primary), 05C65, 05C80 (secondary)}

\begin{abstract}
Extremal problems for $3$-uniform hypergraphs  are known to be very difficult and despite 
considerable effort the progress has been slow. We suggest a more systematic study of extremal 
problems in the context of quasirandom hypergraphs. We say 
that a $3$-uniform hypergraph $H=(V,E)$ is \emph{weakly $(d,\eta)$-quasirandom} if for any subset 
$U\subseteq V$ the number of hyperedges of $H$ contained in $U$ is in the
interval $d\binom{|U|}{3}\pm\eta|V|^3$. We show that for any $\eps>0$ there exists  $\eta>0$ such that 
every sufficiently large 
weakly $(1/4+\eps,\eta)$-quasirandom hypergraph contains four vertices spanning at least three hyperedges.
This was conjectured by Erd\H os and S\'os and it is known that the density $1/4$ is best possible.

Recently, a computer assisted proof of this result based on the flag-algebra method was 
established by Glebov, Kr{\'a}\v{l}, and Volec.
In contrast to their work our proof presented here is based on the regularity method of hypergraphs and requires no 
heavy computations. In addition we obtain an ordered version of this result.
The method of our proof allows us to study extremal problems of this type 
in a more systematic way and we discuss a few extensions and open problems here. 
\end{abstract} 

\maketitle


\section{Introduction}  
\subsection{Extremal problems for graphs and hypergraphs}
Given a fixed graph $F$ a typical problem in extremal graph theory asks for the maximum number of edges that a (large) graph~$G$ on $n$ vertices containing no copy of $F$ can have. More formally, for a fixed graph~$F$ let the \emph{extremal number $\ex(n, F)$} be the number $|E|$ 
of edges of an  $F$-free graph~$G=(V,E)$ on $|V|=n$ vertices with the maximum number of edges. It is well known and not hard to observe that the sequence 
$
\ex(n, F)/\binom{n}{2}
$
is decreasing. Consequently one may define the \emph{Tur\'an density}
\[
\pi(F)=\lim_{n\to\infty}  \frac{\ex(n, F)}{\binom{n}{2}}
\]
which describes the maximum density of large $F$-free graphs. The systematic study of these extremal parameters was initiated 
by Tur\'an~\cite{Tu41}, who determined $\ex(n,K_k)$ for complete graphs $K_k$. Recalling that the chromatic number $\chi(F)$ 
of a graph $F$ is the minimum number of colours one can assign to the vertices of $F$ in such a way that any two vertices connected by an edge receive distinct colours, it follows from a result of Erd\H{o}s and Stone~\cite{ErSt46} that
\[
\pi(F)=1-\frac1{\chi(F)-1}
\] 
(see also~\cite{ErSi66}, where the result in this form appeared first). 
In particular, the value of $\pi(F)$ can be calculated in finite time. It also follows that  the set 
$\{\pi(F)\colon F \text{ is a graph}\}$
of all Tur\'an densities of graphs 
is given by
\[
\bigl\{0, \tfrac12, \tfrac23, \ldots,\tfrac{k-1}{k},\ldots\bigr\}\,.
\]

Already in his original work~\cite{Tu41} Tur\'an asked for hypergraph extensions of these extremal problems.
We restrict ourselves to \emph{$3$-uniform hypergraphs $H=(V,E)$}, where $V=V(H)$ is a finite set of \emph{vertices}
and the set of \emph{hyperedges} $E=E(H)\subseteq V^{(3)}$ is a family of the $3$-element subsets of the vertices.
Despite considerable effort, even for $3$-uniform hypergraphs $F$
no similar characterisation (as in the graph case) is known. Determining the value of $\pi(F)$ is a well known and hard problem even for ``simple'' hypergraphs like the complete $3$-uniform hypergraph $K_4^{(3)}$ on four vertices and $K_4^{(3)-}$, the hypergraph with four
vertices and three hyperedges. Currently the best known bounds for these Tur\'an densities are 
\[
	\frac{5}{9}\leq\pi(K_4^{(3)})\leq 0.5616 
	\qqand
	\frac{2}{7}\leq\pi(K_4^{(3)-})\leq 0.2871\,,
\] 
where the lower bounds are given by what is believed to be optimal constructions due to Tur\'an (see, e.g.,~\cite{Er77}) and Frankl and F\"uredi~\cite{FrFu84}.
The stated upper bounds are due to Razborov~\cite{Ra10} and Baber and Talbot~\cite{BaTa11}
and their proofs are based on the \emph{flag algebra method} introduced by Razborov~\cite{Ra07}. For a thorough discussion of Tur\'an type 
results and problems for hypergraphs we refer to the recent survey of Keevash~\cite{Ke11}.

\subsection{Quasirandom graphs and hypergraphs}
We consider a variant of Tur\'an type questions in connection with quasirandom hypergraphs. Roughly speaking,
a quasirandom hypergraph ``resembles'' a random hypergraph of the same edge density, by 
sharing some of the key properties with it, i.e., properties that hold true for the random  
hypergraph with probability close to $1$.

The 
investigation of quasirandom graphs was initiated with the observation
that several such properties of randomly generated graphs 
are equivalent in a deterministic sense. This phenomenon turned 
out to be useful and had a number of applications in combinatorics. 
The systematic study of quasirandom graphs was initiated  by Thomason~\cites{Th87a,Th87b} and by Chung, Graham, and Wilson~\cite{CGW89}. 
A pivotal feature of random graphs is the uniform edge distribution on ``large'' sets of vertices and a quantitative version
of this property is used to define quasirandom graphs. 

More precisely, a graph $G=(V, E)$ is \emph{quasirandom with density 
$d>0$} if for every subset of vertices $U\subseteq V$ the number $e(U)$ of edges contained in $U$ satisfies 
\begin{equation}\label{eq:qrG}
e(U)=d\tbinom{|U|}{2}+o(|V|^2)\,,
\end{equation}
where $o(|V|^2)/|V|^2\to0$ as $|V(G)|$ tends to infinity. Strictly speaking, we consider here 
a sequence of graphs $G_n=(V_n,E_n)$ where the number of vertices $|V_n|$ tends to infinity, but for the sake of 
a simpler presentation we will suppress the sequence in the discussion here. 
The main result in~\cite{CGW89} asserts, that satisfying~\eqref{eq:qrG} 
is deterministically equivalent to several other important properties of random graphs. In particular, it implies 
that for any fixed graph $F$ with $v_F$ vertices and $e_F$ edges
the number $N_F(G)$ of labeled copies of $F$ in a quasirandom graph $G=(V,E)$ of density $d$
satisfies
\begin{equation}\label{eq:qrN}
	N_F(G)=d^{e_F}|V|^{v_F}+o(|V|^{v_F})\,.
\end{equation}
In other words, the number of copies of $F$ 
is close to the expected value in a random graph with edge density $d$.

The analogous statement for hypergraphs fails to be true and uniform edge distribution on vertex sets 
is not sufficient to enforce a property similar
to~\eqref{eq:qrN} for all fixed $3$-uniform hypergraphs $F$ (see, e.g., Example~\ref{ex:tournament} below). 
A stronger notion of quasirandomness for which such an embedding result actually is true,
was considered in connection with the regularity lemma for hypergraphs (cf.\ Theorem~\ref{thm:RL} below).
The central notion for the work presented here, however,  is the straightforward 
extension of~\eqref{eq:qrG} to $3$-uniform hypergraphs, which was for example studied in~\cites{CHPS,KNRS}.

\begin{dfn}
\label{wqr}
A $3$-uniform hypergraph $H=(V, E)$ is \emph{weakly $(d, \eta)$-quasirandom}
if for every subset $U\subseteq V$ of vertices the number $e(U)$ of hyperedges contained in
$U$ satisfies
\begin{equation}\label{eq:defwqr}
	\big|e(U)-d\tbinom{|U|}{3}\big|\leq \eta\,n^3\,.
\end{equation}
\end{dfn}
For future reference we note that a simple application of the sieve formula shows 
that the condition~\eqref{eq:defwqr}
implies 
\begin{equation}
\label{wqr-abc}
	\big|e(X, Y, Z)-d\,|X|\,|Y|\,|Z|\big|\leq 7\eta\, n^3
\end{equation}
for all $X, Y, Z\subseteq V$, where $e(X, Y, Z)$ denotes the number of triples $(x,y,z)\in X\times Y\times Z$
for which $\{x,y,z\}$ is a hyperedge of $H$. 
We shall denote by $\Qvvv(d,\eta)$ the class of all $3$-uniform weakly $(d,\eta)$-quasirandom hypergraphs, where
the three dots $\vvv$ appearing in the index of~$\ccQ$ 
symbolically represent the possible choices for the three sets $X$, $Y$, and~$Z$ 
from formula~$\eqref{wqr-abc}$. In fact, we will consider other classes of 
quasirandom $3$-uniform hypergraphs, which we will symbolise
by~$\Qev$ and~$\Qee$ and which we will investigate in connection with Tur\'an type question 
in~\cite{RRS-c} and~\cite{RRS-d} (see also Definition~\ref{12qr}).

\subsection{Extremal problems for weakly quasirandom hypergraphs}
Since in contrast to graphs, weakly quasirandom hypergraphs $H$ 
may not contain every fixed hypergraph $F$ it seems interesting to determine the 
maximum density~$d$ for which a weakly quasirandom $F$-free hypergraph of density $d$ exists.
This leads to the following notion of Tur\'an density for weakly quasirandom hypergraphs.
\begin{dfn} Given a $3$-uniform hypergraph $F$ we set
\begin{multline*}
	\pivvv(F)=\sup\bigl\{d\in[0,1]\colon \text{for every $\eta>0$ and $n\in \NN$ there exists an $F$-free,}\\
		\text{$3$-uniform hypergraph $H\in\Qvvv(d,\eta)$ with $|V(H)|\geq n$}\bigr\}\,.
\end{multline*}
\end{dfn}

Erd\H os and S\'os~\cite{ErSo82} (see also~\cite{Er90}) were the first to raise questions concerning $\pivvv(F)$. In particular, 
they suggested to study the cases when $F=K_4^{(3)-}$ or $F$ is a complete $3$-uniform hypergraph $K_k^{(3)}$. 
The following probabilistic construction, which can be traced back 
to the work of Erd\H os and Hajnal~\cite{ErHa72}, yields $\pivvv(K_4^{(3)-})\geq 1/4$.
\begin{exmp}
	\label{ex:tournament}
	Consider a random tournament $T_n$ on the vertex set $[n]=\{1,\dots,n\}$, i.e., an orientation of all edges 
	of the complete graph on the first $n$ positive integers such that each of the two directions $(i,j)$ or $(j,i)$
	of every pair of vertices $\{i,j\}$
	is chosen independently with probability $1/2$. Given such a tournament~$T_n$ we define 
	the $3$-uniform hypergraph~$H(T_n)$ on the same vertex set, by including the triple $\{i,j,k\}$ in $E(H(T_n))$
	if these three vertices span a cyclically oriented cycle of length three, 
	i.e., $\{i,j,k\}\in E(H(T_n))$ if either $(i,j)$, $(j,k)$, and~$(k,i)$ are all in $E(T_n)$ or  
	$(i,k)$, $(k,j)$, and $(j,i)$ are all in $E(T_n)$. It is easy to check that for every $\eta>0$
	with probability tending to $1$ as $n\to\infty$
	the hypergraph $H(T_n)$ is weakly $(1/4,\eta)$-quasirandom. Moreover, no hypergraph $H$
	obtained from a tournament in this way contains three hyperedges on four vertices, i.e., every such $H$
	is $K_4^{(3)-}$-free and this establishes $\pivvv(K_4^{(3)-})\geq 1/4$.
\end{exmp}
Recently, Glebov, Kr{\'a}{\v{l}}, and Volec~\cite{GKV} showed that the construction in Example~\ref{ex:tournament}
is optimal and proved
\[
	\pivvv(K_4^{(3)-})=\tfrac{1}{4}\,.
\]
The proof in~\cite{GKV} is computer assisted and based on the flag algebra method. We present
a computer free and very different proof of the same result. Moreover, our proof yields a strengthening of the result
which for ordered vertex sets guarantees the appearance of the~$K_4^{(3)-}$ in such a way that 
the apex vertex, that is the vertex  incident to three hyperedges of the $K_4^{(3)-}$, is either the first or the last.
Our method of proofs seems to open an approach to attack several other problems of this type and we shall discuss this 
in more detail in the concluding remarks in Section~\ref{sec:conc_rem}.       
\begin{thm}\label{K4-}
For every $\eps>0$ there exists an $\eta>0$ and an integer $n_0$ such that for every $n\geq n_0$
every 3-uniform  weakly $(\tfrac14+\eps, \eta)$-quasirandom hypergraph $H$
with vertex set~$V(H)=[n]$ contains a $K_4^{(3)-}$ in $H$ whose apex is either 
its smallest or its largest vertex. 
\end{thm}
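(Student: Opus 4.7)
My plan is a proof by contradiction using the regularity method for $3$-uniform hypergraphs. First I reformulate the problem in terms of link graphs: for a vertex $v\in[n]$, let $L_v$ denote its link graph on $V(H)\setminus\{v\}$, whose edges are the pairs $\{x,y\}$ with $\{v,x,y\}\in E(H)$, and set $L_v^-:=L_v[\{u\colon u<v\}]$ and $L_v^+:=L_v[\{u\colon u>v\}]$. A $K_4^{(3)-}$ with apex $v$ is exactly a triangle in $L_v$, and the apex is the smallest (respectively largest) vertex of the copy precisely when this triangle sits inside $L_v^+$ (respectively $L_v^-$). It therefore suffices to derive a contradiction from the standing assumption that both $L_v^-$ and $L_v^+$ are triangle-free for every $v\in[n]$.

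Next I apply the hypergraph regularity lemma (Theorem~\ref{thm:RL}) to $H$ with parameters tuned to $\eps$, yielding an equitable partition $V(H)=V_1\dcup\cdots\dcup V_t$, a cylindrical partition of each complete bipartite graph $K(V_i,V_j)$ into regular bipartite pieces, and the regularity of $H$ with respect to almost all triads. By splitting every class along its intersections with a bounded number of contiguous order-intervals of $[n]$, I may assume each $V_i$ lies in a short interval, so that the linear order on vertices descends to a linear order $V_1<V_2<\cdots<V_t$ on the classes. A direct averaging based on~\eqref{wqr-abc} and the weak $(1/4+\eps,\eta)$-quasirandomness of $H$ then shows that most hyperedges of $H$ are captured by triads carrying relative density at least $1/4+\eps/2$.

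The heart of the argument is to extract from this ordered regular structure a $K_4^{(3)-}$ with its apex at either end of the order. Concretely, locating four classes $V_i<V_j<V_k<V_l$ such that the three triads $(V_i,V_j,V_k)$, $(V_i,V_j,V_l)$, $(V_i,V_k,V_l)$ (or their counterparts at the top) are all simultaneously dense and regular would produce, via the hypergraph counting lemma, an apex-smallest (respectively apex-largest) $K_4^{(3)-}$ by taking one vertex from each class with $v$ chosen in $V_i$ (respectively $V_l$). The chief obstacle is that $1/4$ is precisely the extremal threshold: the Erd\H{o}s--Hajnal tournament of Example~\ref{ex:tournament} certifies density $1/4$ with \emph{every} link graph $L_v$ bipartite, and hence no soft density-only counting can succeed. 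To spend the surplus $\eps>0$ I plan a stability/supersaturation argument that either directly locates a dense enough ordered $4$-tuple of classes for the counting lemma to force the configuration, or shows that if all the $L_v$ were forced to be near-bipartite in order to avoid it, then the bipartitions would have to be coordinated across different $v\in[n]$ in a way incompatible with the weak quasirandomness of $H$. Making this coordination quantitative and dovetailing it with the cylindrical regular structure is the principal technical hurdle I expect to wrestle with.
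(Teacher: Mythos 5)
Your reformulation into link graphs, your plan to order the classes by splitting into intervals, and your recognition that $1/4$ is exactly the tournament threshold so ``no soft density-only counting can succeed'' are all sound and match the spirit of the paper's argument. But the proposal misses the decisive structural idea, and a key averaging step is stated incorrectly.

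First, the averaging. Weak $(1/4+\eps,\eta)$-quasirandomness together with~\eqref{wqr-abc} does not show that most hyperedges lie in triads of relative density $\ge 1/4+\eps/2$. What it shows (combined with the triangle counting lemma~\eqref{eq:TCL}) is that for \emph{each} triple of classes $i<j<k$, at least a $(1/4+\eps/4)$-\emph{fraction} of the $\ell^3$ triads $P^{ijk}_{\alpha\beta\gamma}$ have relative density at least the small constant $d_3$. The ``$1/4+\eps$'' re-enters as the density of an auxiliary structure, not as a lower bound on the densities of individual triads. This distinction drives the whole proof.

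Second, the missing idea. In the regular partition a triad is not a triple of classes; it is a triple of classes \emph{together with a choice of one bipartite piece $P^{ij}_\alpha\in\cP^{ij}$ per pair}. To embed $K_4^{(3)-}$ you need three of the four relevant triads to be dense, but these triads must share the \emph{same} bipartite piece whenever they share a pair of classes. You cannot achieve this by ``locating four classes with three dense triads'' -- you must find a coherent choice of six bipartite pieces. The paper resolves this by introducing a $\binom{m}{2}$-partite auxiliary $3$-uniform hypergraph $\cA$ whose vertex classes are the families $\cP^{ij}$ and whose hyperedges record dense triads. By the averaging above, each $\cA^{ijk}$ has density $\ge 1/4+\eps/4$. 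Lemma~\ref{K4-modif} then locates the needed configuration in $\cA$ via a Cauchy--Schwarz step that converts the tripartite density condition into one of two mean-square degree conditions on ``reduced'' bipartite graphs $Q^i_{jk}$, a two-colouring of triples, Ramsey's theorem to pin down a monochromatic $m_*$-set, and finally the genuinely new Theorem~\ref{sync}: a triangle-forcing mean-square degree condition in multipartite graphs. Your proposed ``stability/supersaturation'' or ``coordinated bipartitions'' argument does not describe this mechanism and, as written, gives no concrete route around the coherence obstruction.

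In short: the proposal is a plausible sketch of the preliminary phase (regularity lemma, ordering, identifying the obstruction), but it stops exactly where the proof becomes nontrivial. The auxiliary hypergraph on bipartite pieces and the chain Cauchy--Schwarz $\rightarrow$ Ramsey $\rightarrow$ Theorem~\ref{sync} are the heart of the argument and are absent.
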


Strictly speaking, the authors of~\cite{ErSo82} and~\cite{GKV}
considered a notion slightly different from the weak quasirandomness as defined in Definition~\ref{wqr}.
In their formulation they only required for an $n$-vertex hypergraph a lower bound of the form $e(U)\geq d\binom{|U|}{3}-\eta n^3$
for every set of vertices~$U$. However, a somewhat standard application of the so-called weak regularity lemma 
for hypergraphs (straightforward extension of Szemer\'edi's regularity lemma for graphs~\cite{Sz78}) 
implies that such a hypergraph contains a weakly $(d',\eta')$-quasirandom hypergraph on $cn$ vertices
for some $d'\geq d$, $c=c(d,\eta)>0$ and $\eta'$ with $\eta'\to0$ as~$\eta\to0$ and thus for the statement of 
Theorem~\ref{K4-} both assumption are equivalent (see, e.g.,~\cite{RRS-e}*{Proposition~2.5}).

\subsection*{Organisation} A central tool in the proof of Theorem~\ref{K4-} 
is the regularity method for $3$-uniform hypergraphs
and we will introduce the relevant notation and results in Section~\ref{sec:regmethod}. Roughly speaking, the 
regularity lemma (Theorem~\ref{thm:RL}) allows us to decompose any given large hypergraph into quasirandom blocks. In fact, 
the blocks will enjoy stronger quasirandom properties (compared to Definition~\ref{eq:defwqr}), which in ``appropriate situations'' allow the embedding of 
any fixed hypergraph (see Theorem~\ref{thm:EL}). The main work in the proof of Theorem~\ref{K4-} 
is to ensure such ``appropriate situations'' for embedding $K_4^{(3)-}$
after the application of the regularity lemma. These arguments will require 
several ideas from Ramsey theory and extremal graph theory. In particular, in the proof of Theorem~\ref{K4-} we will 
establish  a mean square degree condition in multipartite graphs that yields the existence of triangles and this result (Theorem~\ref{sync})
presented in the next section (Section~\ref{sec:forcing})
might be of independent interest. The proof of Theorem~\ref{K4-} will be given 
in Section~\ref{sec:K4-}. We close with a discussion of a few related results and 
open problems in Section~\ref{sec:conc_rem}.

\section{Forcing triangles in multipartite graphs}\label{sec:forcing}

In this section we shall prove a purely graph theoretic statement that will later be used in the proof of $\pivvv{(K_4^{(3)-})}=\tfrac14$. Essentially what we have to do then is to find a triangle in the link of a vertex of some weakly quasirandom $3$-uniform hypergraph~$H$, and after regularization this will become a problem about finding a triangle in some auxiliary multipartite graph. The vertices of this auxiliary graph will actually not correspond to the vertices of $H$ but rather to some bipartite graphs on $V(H)$, but this subtlety can be ignored until we reach Section~\ref{sec:K4-}.

The idea to study multipartite versions of, e.g., Mantel's theorem, or more generally of the 
Erd\H{o}s-Stone theorem, seems to go back at least to a suggestion by Bollob\'as, 
see the discussion after the proof of Theorem VI.2.15 in~\cite{BolEx}. The first systematic investigations of this kind have, to the best of our knowledge, been carried out by Bondy et al.\ in~\cite{BSTT}. In the case of triangles they showed the following: let $d_m$ denote the infimal real number with the property that any $m$-partite graph $G$ contains a triangle as soon as every edge density between two vertex classes of $G$ is greater than $d_m$. Then $d_m$ tends to $\tfrac12$ as $m$ tends to infinity, and moreover the statement $d_{\aleph_0}=\tfrac12$ about infinite-partite graphs with countably many vertex classes holds. In the other direction those authors 
showed that $d_4>1/2$. The situation was further clarified by Pfender~\cite{P} who proved that actually $d_m=\tfrac12$ holds for all $m\ge 12$
and determining the smallest $m$ with $d_m=1/2$ is an interesting open problem.

The theorem that follows is of a similar flavour.
We use the following notation. For an~$m$-partite graph $G=(V,E)$ with vertex classes $V_1\dcup\dots\dcup V_m=V$ we denote for every vertex $x\in V$
and $j\in[m]$ by $d_j(x)$ the size of the neighbourhood of $x$ in $V_j$.

\begin{thm}\label{sync}
For every $\eps>0$ there exists an integer $m$ such that if an $m$-partite graph~$G$
with nonempty vertex classes $V_1, \ldots, V_m$ satisfies
\[
\sum_{x\in V_i}d_j(x)^2\ge\bigl(\tfrac14+\eps\bigr)|V_i|\,|V_j|^2
\]
whenever $1\le i<j\le m$, then $G$ contains a triangle.
\end{thm}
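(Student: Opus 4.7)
My plan is a proof by contradiction: assume $G$ contains no triangle and show that $m$ must be bounded by a function of $\eps$.

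\emph{Step 1 (Thresholding and a spread vertex).} For every pair $i<j$ the mean-square hypothesis, combined with $d_j(x)\le|V_j|$, gives by an elementary second-moment inequality the existence of a set
\[
U_{ij}=\bigl\{x\in V_i:d_j(x)>(\tfrac12+\eta)|V_j|\bigr\}
\]
of size at least $\alpha|V_i|$, for suitable constants $\alpha,\eta>0$ depending only on~$\eps$: otherwise too many vertices would have $d_j(x)/|V_j|\le\tfrac12+\eta$ and their contribution to $\sum_x d_j(x)^2$ would fall short of $(\tfrac14+\eps)|V_i||V_j|^2$. Averaging $|\{j>1:x\in U_{1j}\}|$ over $x\in V_1$ then produces a ``spread'' vertex $x^*\in V_1$ belonging to~$U_{1j}$ for at least $|J|:=\alpha(m-1)$ many $j$, where $J=\{j>1:x^*\in U_{1j}\}$. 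For each $j\in J$ the set $W_j:=N_j(x^*)\subseteq V_j$ has $|W_j|>(\tfrac12+\eta)|V_j|$.

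\emph{Step 2 (Closing up with triangle-freeness).} Any edge between two distinct sets $W_j,W_k$ with $j,k\in J$ would together with~$x^*$ form a triangle, so triangle-freeness makes $\bigcup_{j\in J}W_j$ an independent set in~$G$. Consequently, for every pair $j<k$ in~$J$ one must have $W_j\cap U_{jk}=\varnothing$: any $y\in W_j\cap U_{jk}$ would enjoy $|N_k(y)\cap W_k|\ge|N_k(y)|+|W_k|-|V_k|>2\eta|V_k|$ neighbours in~$W_k$, producing a forbidden edge.

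\emph{Step 3 (Iteration and Pfender).} The heart of the argument is to make these disjointness conditions incompatible with $|W_j|>(\tfrac12+\eta)|V_j|$. I would apply the Step~1 averaging argument again, now inside~$V_{j_0}$ for $j_0=\min J$: some $x^{**}\in V_{j_0}$ lies in $U_{j_0,k}$ for at least $\alpha(|J|-1)$ many $k\in J$ with $k>j_0$. Had such an $x^{**}$ fallen inside~$W_{j_0}$, then $x^*,x^{**}$ would be adjacent and both would have more than $\tfrac12+\eta$ of the vertices of some common $V_k$ (with $k\in J$) as neighbours, so any $z\in N_k(x^*)\cap N_k(x^{**})$ would close up to a triangle $\{x^*,x^{**},z\}$; hence $x^{**}\in V_{j_0}\setminus W_{j_0}$. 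Iterating this reduction inside $V_{j_0}\setminus W_{j_0}$ (combined with a Ramsey-type selection on the index set~$J$ that spreads out the sets $U_{j,k}$) should eventually either force a direct contradiction with the size of $W_{j_0}$, or reduce the situation to a multipartite subgraph on sufficiently many classes in which every pair of classes has edge density exceeding~$\tfrac12$; Pfender's theorem~\cite{P} then supplies a triangle, contradicting our standing assumption.

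The principal obstacle I anticipate is the covering estimate in Step~3: with no extra structure the sets $U_{j_0,k}$ for varying $k\in J$ could all coincide, so $|\bigcup_k U_{j_0,k}|$ might be as small as $\alpha|V_{j_0}|$ — not enough to contradict $|W_{j_0}|>(\tfrac12+\eta)|V_{j_0}|$ by itself. A Ramsey selection on~$J$ that trims it to a subset on which the $U_{j,k}$'s are sufficiently spread out, together with careful tracking of the parameters $\alpha,\eta$ through the (bounded in terms of $\eps$) number of iterations before Pfender's theorem becomes applicable, is what will make the argument go through.
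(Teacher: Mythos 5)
Your Steps 1 and 2 are sound and in fact mirror the opening moves of the paper's own argument: a second-moment estimate produces, for each pair of classes, a $\delta$-fraction of vertices with degree exceeding roughly $\tfrac12$, and triangle-freeness forces the disjointness $W_j\cap U_{jk}=\varnothing$ you observe. However, Step 3 is not a proof but an honestly flagged hope, and as stated it has a genuine gap. The disjointness from Step 2 only removes a set of size about $\alpha|V_j|$ from $W_j$, with $\alpha=\Theta(\eps)$ small, so it never contradicts $|W_j|>(\tfrac12+\eta)|V_j|$; and you yourself note that after iterating, the sets $U_{j_0,k}$ could all coincide, killing the covering estimate. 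You also have no mechanism for turning the mean-square degree hypothesis into a genuine \emph{edge-density} bound strictly above $\tfrac12$ between many pairs of classes, which is what an application of Pfender's theorem would require -- and the mean-square condition is strictly weaker (e.g.\ a bipartite graph where $40\%$ of $V_i$ see all of $V_j$ and the rest see nothing has mean-square density $0.4>\tfrac14+\eps$ yet edge density only $0.4$). Neither the iteration nor the reduction to Pfender is carried out, so the proposal does not close.

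The idea your proposal is missing, and which the paper uses, is to Ramsey-colour pairs $\{i,j\}$ by a \emph{variable} degree threshold rather than a fixed one: let $r(i,j)$ be the largest $r$ for which a $\delta$-fraction of $V_i$ has degree at least $(\tfrac12+r\delta)|V_j|$ in $V_j$. Ramsey's theorem on this bounded palette produces a large index set on which the threshold value $r_*$ is \emph{the same} for every pair. Picking a vertex $x\in V_1$ that lies in two of the level sets $Q_{1i}(r_*)$, one then pits the \emph{maximality} of $r_*=r(2,3)$ against the mean-square hypothesis for the pair $(2,3)$: either there is an edge between $N_2(x)$ and $N_3(x)$, closing a triangle through $x$, or every vertex of $N_2(x)$ has degree at most $(\tfrac12-r_*\delta)|V_3|$, and the resulting second-moment computation forces $\eps\le 4\delta$, a contradiction. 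The crucial point is that the contradiction comes from comparing the degrees inside $A_2$ (capped at $\tfrac12-r_*\delta$) with those outside (capped at $\tfrac12+(r_*+1)\delta$ by maximality of $r_*$); with the fixed threshold $\tfrac12+\eta$ of your Step 1 there is no such tension available. If you replace your fixed-threshold sets $U_{ij}$ by the graded family $Q_{ij}(r)$ and use Ramsey to equalize the critical level, your Step 2 observation becomes exactly the mechanism the paper needs, and no appeal to Pfender's theorem (or any iteration) is required.
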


\begin{proof}
For convenience we work with the hierarchy
\[
m^{-1}\ll m_*^{-1}\ll \delta\ll \eps\ll 1\,,
\]
and commence by defining a colouring of the pairs of indices from $[m]$ with integers from the interval $[1, (2\,\delta)^{-1}]$.

Let any $i$ and $j$ with $1\le i<j\le m$ be given. For each $r\in \NN$ we set
\[
Q_{ij}(r)=\bigl\{x\in V_i \colon d_j(x)\ge \bl\tfrac12+r\delta\br\,|V_j|\bigr\}\,.
\]
We contend that $|Q_{ij}(1)|\ge \delta\,|V_i|$. To see this, we split the right hand side of our assumption into two parts according to whether $x$ belongs to the set $Q=Q_{ij}(1)$ or not. This gives
\[
\bigl(\tfrac14+\eps\bigr)|V_i|\,|V_j|^2\le \bl\tfrac12+\delta\br^2|V_i-Q|\,|V_j|^2+|Q|\,|V_j|^2\,.
\]
Dividing by $|V_j|^2$ and using the trivial estimate $|V_i-Q|\le |V_i|$ we deduce
\[
\bigl(\tfrac14+\eps\bigr)|V_i|\le \bl\tfrac12+\delta\br^2|V_i|+|Q|\,,
\]
and by $\delta\ll\eps$ the desired conclusion follows.

Clearly, the set $Q_{ij}(r)$ becomes the smaller the larger we make if $r$, and if $r>(2\,\delta)^{-1}$ then $Q_{ij}(r)=\varnothing$ holds vacuously. Thus there exists a largest positive value of $r$, denoted by $r(i,j)$ in the sequel, for which $|Q_{ij}(r)|\ge \delta\,|V_i|$ 
holds. This concludes the definition of our colouring
\[
r\colon [m]^{(2)}\rightarrow \bigl\{1, 2, \ldots, \lfloor (2\,\delta)^{-1} \rfloor \bigr\}\,.
\]

By Ramsey's theorem, i.e., since we may assume the validity of the partition relation
\[
m\longrightarrow (m_*)^2_{\lfloor (2\,\delta)^{-1} \rfloor}\,,
\]
it is allowed to assume that after some relabeling there is a colour $r_*$ such that $r(i, j)=r_*$ holds whenever $1\le i<j\le m_*$. Of course, we should now find a triangle in $G$ with vertices from $V_1\cup \ldots \cup V_{m_*}$. It will turn out that there actually is such a triangle possessing a vertex in $V_1$.

Next we will single out some vertex from $V_1$ that will later be shown to appear in some triangle of $G$. To this end, we recall that $|Q_{1i}(r_*)|\ge \delta\,|V_1|$ holds for all $i\in\{2, \ldots, {m_*}\}$. It follows that the subsets $Q_{12}(r_*), \ldots, Q_{1m_*}(r_*)$ of $V_1$ cannot be disjoint provided we have chosen $m_*$ large enough. This means that some vertex $x\in V_1$ appears in at least two of them. For notational simplicity we assume $x\in Q_{12}(r_*)$ as well as $x\in Q_{13}(r_*)$ and endeavor to construct a triangle with vertices from $\{x\}\cup V_2\cup V_3$.

Let $A_2$ denote the set of neighbours of $x$ in $V_2$, set $B_2=V_2-A_2$, and define $A_3$ as well as $B_3$ analogously. The choice of $x$ implies 
$|A_2|=d_2(x)\ge \bl\tfrac12+r_*\delta\br|V_2|$ 
and 
$A_3(x)\ge \bl\tfrac12+r_*\delta\br|V_3|$.  
Any edge between $A_2$ and $A_3$ gives rise to a triangle of the desired kind, so for the sake of a contradiction we will henceforth assume that no such edge would exist.

Then we have $d_3(y)\le |B_3|\le \bl\tfrac12-r_*\,\delta\br|V_3|$ for all $y\in A_2$. For $y\in B_2$ we either have $d_3(y)<\bl\tfrac12+(r_*+1)\delta\br|V_3|$, or $y$ belongs to the set $C=Q_{23}(r_*+1)$. But the maximality of $r_*=r(2,3)$ implies 
$|C|<\delta\,|V_2|$ and for $y\in C$ we still have $d_3(y)\le |V_3|$. So dividing the right hand side of the assumption 
\[
\bigl(\tfrac14+\eps\bigr)|V_2|\,|V_3|^2 \le \sum_{y\in V_2}d_3(y)^2
\]
into three parts depending on whether $y$ appears in $A_2$, $B_2-C$ or $C$ we derive
\[
\bigl(\tfrac14+\eps\bigr)|V_2|\,|V_3|^2 \le 
|A_2|\bl\tfrac12-r_*\,\delta\br^2|V_3|^2
+|B_2|\bl\tfrac12+(r_*+1)\delta\br^2|V_3|^2
+\delta\,|V_2|\,|V_3|^2\,.  
\]
Since $|A_2|\ge \bl\tfrac12+r_*\delta\br|V_2|>\tfrac12\,|V_2|$ and $|A_2|+|B_2|=|V_2|$, this implies
\[
\tfrac14+\eps\le 
\bl\tfrac12+r_*\delta\br\bl\tfrac12-r_*\,\delta\br^2
+\bl\tfrac12-r_*\delta\br\bl\tfrac12+(r_*+1)\delta\br^2+\delta\,.
\]
Now $\tfrac12+r_*\delta\le 1$ and for each $x\in [0,1]$ we have $(x+\delta)^2\le x^2+3\delta$ by $\delta\le 1$, so 
\[
\tfrac14+\eps\le 
\bl\tfrac12+r_*\delta\br\bl\tfrac12-r_*\,\delta\br^2
+\bl\tfrac12-r_*\delta\br\bl\tfrac12+r_*\delta\br^2+4\,\delta\,.
\]
Here, the sum of the first two terms gives $\tfrac14-(r_*\,\delta)^2$ and hence at most $\tfrac14$, so that altogether we get $\eps\le 4\,\delta$, contrary to $\delta\ll\eps$. Thereby Theorem~\ref{sync} is proved.
\end{proof}

The authors of the articles cited at the beginning of this section actually studied the more general question of finding larger cliques, or even arbitrary graphs, in dense multipartite graphs, obtaining results comparable to those indicated above. Similarly, the proof of Theorem~\ref{sync}
generalises in a straightforward way from triangles to arbitrary cliques $K_k$ and we omit the details.
\begin{thm}\label{thm:Fsync}
For every $\eps>0$ and $k\geq 3$ there exists an integer $m$ such that if an~$m$-partite graph~$G$
with nonempty vertex classes $V_1, \ldots, V_m$ satisfies
\[
\sum_{x\in V_i}d_j(x)^2\ge\Bigl(\big(\tfrac{k-2}{k-1}\big)^2+\eps\Bigr)|V_i|\,|V_j|^2
\]
whenever $1\le i<j\le m$, then $G$ contains a clique $K_k$.\qed
\end{thm}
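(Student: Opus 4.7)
The plan is to prove the statement by induction on $k\geq3$. The case $k=3$ is exactly Theorem~\ref{sync}.

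For the inductive step I would assume the result for $k-1$, with some witness function $m_{k-1}(\cdot)$, and fix a hierarchy
\[
m^{-1}\ll m_*^{-1}\ll m_{**}^{-1}\ll\delta\ll\eps'\ll\eps
\]
with $m_{**}\geq m_{k-1}(\eps')$. The opening moves of the proof of Theorem~\ref{sync} then carry over verbatim with the parameter $\alpha=\tfrac{k-2}{k-1}$ taking over the role of $\tfrac12$: introducing the heavy sets
\[
Q_{ij}(r)=\bigl\{x\in V_i\colon d_j(x)\geq(\alpha+r\delta)|V_j|\bigr\},
\]
the same two-way split of $\sum_{x\in V_i}d_j(x)^2$ gives $|Q_{ij}(1)|\geq\delta|V_i|$; letting $r(i,j)$ be the largest $r$ for which $|Q_{ij}(r)|\geq\delta|V_i|$, Ramsey's theorem delivers (after relabelling) $m_*$ vertex classes on which this colouring is constantly some value $r_*$.

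Next, by averaging over $V_1$, one can find a vertex $v\in V_1$ belonging to at least $m_{**}$ of the sets $Q_{1j}(r_*)$; after one more relabelling this holds for $j=2,\ldots,m_{**}+1$, and the sets $A_j=N(v)\cap V_j$ satisfy $|A_j|\geq(\alpha+r_*\delta)|V_j|$. I would then aim to establish the inequality
\[
\sum_{x\in A_i}|N(x)\cap A_j|^2\geq\Bigl(\big(\tfrac{k-3}{k-2}\big)^2+\eps'\Bigr)|A_i|\,|A_j|^2
\]
for all $2\leq i<j\leq m_{**}+1$. This is precisely the hypothesis of the theorem for $k-1$ applied to the induced subgraph $G[A_2\cup\dots\cup A_{m_{**}+1}]$, so the inductive hypothesis would produce a $K_{k-1}$ there, which together with $v$ would furnish the desired $K_k$.

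The main obstacle is the verification of this inequality, which parallels the quantitative contradiction argument at the end of the proof of Theorem~\ref{sync}. The ingredients I would use are the elementary bound $|N(x)\cap A_j|\geq d_j(x)-(|V_j|-|A_j|)$, the maximality of $r_*$ (giving $|Q_{ij}(r_*+1)|<\delta|V_i|$), and a quantitative lower bound on $r_*$ forced by the chain $\alpha^2+\eps\leq(\alpha+(r_*+1)\delta)^2+\delta$, which ensures that $|A_j|$ strictly exceeds $\alpha|V_j|$ by an amount bounded below in terms of $\eps$ alone. Splitting $A_i$ into a heavy, an exceptional, and a generic part in analogy with the $(A_2,B_2\setminus C,C)$ partition of Theorem~\ref{sync} and estimating each contribution separately, the inequality should fall out after some routine algebra. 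The bookkeeping is more elaborate than in the triangle case, but no new conceptual ingredient is required.
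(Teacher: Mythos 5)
Your inductive framework---Ramsey-homogenise the colouring $r(i,j)$, average over $V_1$ to find a vertex $v$ lying in many of the sets $Q_{1j}(r_*)$, pass to the neighbourhoods $A_j=N(v)\cap V_j$, and invoke the statement for $k-1$ on the resulting $(m_{**})$-partite graph---is the natural way to generalise Theorem~\ref{sync}, and I believe it is what the paper has in mind when it says the proof ``generalises in a straightforward way''. The parameter bookkeeping (the lower bound $(r_*+1)\delta\gtrsim\eps/2\alpha$ coming from $\alpha^2+\eps\leq\delta+(\alpha+(r_*+1)\delta)^2$, and the use of the maximality of $r_*$ to control $Q_{ij}(r_*+1)$) is correct.

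There is, however, a concrete gap in the step you flag as the ``main obstacle''. You propose to lower-bound $\sum_{x\in A_i}|N(x)\cap A_j|^2$ by squaring the pointwise estimate $|N(x)\cap A_j|\geq d_j(x)-(|V_j|-|A_j|)$. But this right-hand side is negative whenever $d_j(x)<|V_j|-|A_j|$, and for those $x$ one has $|N(x)\cap A_j|^2\geq 0$ while $\bigl(d_j(x)-(|V_j|-|A_j|)\bigr)^2>0$; squaring a possibly negative lower bound does \emph{not} yield a lower bound on the square. If one instead works with the correct inequality $\max\bigl(0,\,d_j(x)-(|V_j|-|A_j|)\bigr)^2$, the clean expansion you are relying on is lost, and the resulting estimate is weaker than what the inductive hypothesis for $k-1$ requires (one can check it is already off by roughly $\alpha(1-\alpha)^2$ at the borderline $t=0$, $\eps=0$). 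So the inequality will not ``fall out'' from this particular ingredient as written.

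The fix is to mirror the contradiction structure of Theorem~\ref{sync} more faithfully. Rather than proving the hypothesis for $k-1$ directly, assume $G[A_2\cup\dots\cup A_{m_{**}+1}]$ contains no $K_{k-1}$; by the induction hypothesis (contrapositively) there exist $i<j$ with $\sum_{x\in A_i}|N(x)\cap A_j|^2<\bigl((\tfrac{k-3}{k-2})^2+\eps'\bigr)|A_i||A_j|^2$. Now bound $\sum_{x\in V_i}d_j(x)^2$ from \emph{above} by splitting $V_i$ into $A_i$, $B_i\setminus C$ and $C$ exactly as in Theorem~\ref{sync}, using for $x\in A_i$ that $d_j(x)\leq |N(x)\cap A_j|+|B_j|$ and hence $d_j(x)^2\leq |N(x)\cap A_j|^2+2|B_j|\,|N(x)\cap A_j|+|B_j|^2$, together with Cauchy--Schwarz for $\sum_{A_i}|N(x)\cap A_j|$. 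Since both sides are nonnegative this squaring is legitimate, and after normalising one is led to verifying that the cubic
\[
\psi(s)=s\bigl(1-s(1-\beta)\bigr)^2+(1-s)s^2,\qquad \beta=\tfrac{k-3}{k-2},
\]
satisfies $\psi(\alpha)=\alpha^2$, $\psi'(\alpha)=0$ and $\psi''(s)<0$ on $[\alpha,1]$ for $\alpha=\tfrac{k-2}{k-1}$---whence $\sum_{V_i}d_j(x)^2<(\alpha^2+O(\delta+\eps'))|V_i||V_j|^2$, contradicting the assumption. This is still ``elaborate but elementary'' bookkeeping, but it is not the computation you sketched; in particular the sign issue above and the cubic analysis of $\psi$ are genuine steps that need to be carried out, not just announced.
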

In fact, the proof guarantees~$\Omega(n^k)$ copies of~$K_k$ and as a result we may 
replace $K_k$ in  Theorem~\ref{thm:Fsync} by an arbitrary graph~$F$ with chromatic number $\chi(F)=k$.

\section{Hypergraph regularity method}\label{sec:regmethod}
A key tool in the proof of Theorem~\ref{K4-} 
is the regularity lemma for $3$-uniform hypergraphs. 
We follow the approach from~\cites{RoSchRL,RoSchCL} combined with the results from~\cite{Gow06} and~\cite{NPRS09}
and below we introduce the necessary notation.

For two disjoint sets $X$ and $Y$ we denote by $K(X,Y)$ the complete bipartite graph with that vertex partition.
We say a bipartite graph $P=(X\dcup Y,E)$ ist \emph{$(\delta_2, d_2)$-regular} if for all subsets 
$X'\subseteq X$ and $Y'\subseteq Y$ we have 
\[
	\big|e(X',Y')-d_2|X'||Y'|\big|\leq \delta_2 |X||Y|\,,
\]
where $e(X',Y')$ denotes the number of edges of $P$ with one vertex in $X'$ and one vertex in~$Y'$.
Moreover, for $k\geq 2$ we say a $k$-partite graph $P=(X_1\dcup \dots\dcup X_k,E)$ is $(\delta_2, d_2)$-regular, 
if all its  $\binom{k}{2}$ naturally 
induced bipartite subgraphs $P[X_i,X_j]$ are $(\delta_2, d_2)$-regular. 
For a tripartite graph $P=(X\dcup Y\dcup Z,E)$
we denote by $\cK_3(P)$ the triples of vertices spanning a triangle in~$P$, i.e., 
\[
	\cK_3(P)=\{\{x,y,z\}\subseteq X\cup Y\cup Z\colon xy, xz, yz\in E\}\,.
\]
If the tripartite graph $P$ is $(\delta_2, d_2)$-regular, then the so-called \emph{triangle counting lemma}
implies that 
\begin{equation}
	\label{eq:TCL}
		|\cK_3(P)|\leq d_2^3|X||Y||Z|+3\delta_2|X||Y||Z|\,.
\end{equation}

We say a $3$-uniform hypergraph $H=(V,E_H)$ is regular w.r.t.\ a tripartite graph $P$ if it matches 
approximately
the same proportion of triangles for every subgraph $Q\subseteq P$. This we make precise in the following definition.

\begin{dfn}
\label{def:reg}
A $3$-uniform hypergraph $H=(V,E_H)$ is \emph{$(\delta_3,d_3)$-regular w.r.t.\ 
a tripartite graph $P=(X\dcup Y\dcup Z,E_P)$} 
with $V\supseteq  X\cup Y\cup Z$ if for every tripartite subgraph $Q\subseteq P$ we have 
\[
	\big||E_H\cap\cK_3(Q)|-d_3|\cK_3(Q)|\big|\leq \delta_3|\cK_3(P)|\,.
\]
Moreover, we simply say \emph{$H$ is $\delta_3$-regular w.r.t.\ $P$}, if it is $(\delta_3,d_3)$-regular for some $d_3\geq 0$.
We also define the \emph{relative density} of $H$ w.r.t.\ $P$ 
\[
	d(H\,|\,P)=\frac{|E_H\cap\cK_3(P)|}{|\cK_3(P)|}\,,
\]
where we use the convention $d(H\,|\,P)=0$ if $\cK_3(P)=\emptyset$.
\end{dfn}

The regularity lemma for $3$-uniform hypergraphs, introduced by Frankl and R\"odl in~\cite{FR}, provides for 
every hypergraph $H$ a partition of its vertex set and a partition of the edge sets of the complete bipartite 
graphs induced by the vertex partition such that for appropriate constants $\delta_3$, $\delta_2$ and $d_2$ 
\begin{enumerate}[label=\nlabel]
	\item the bipartite graphs given by the partitions are $(\delta_2,d_2)$-regular and
	\item $H$ is $\delta_3$-regular for ``most'' tripartite graphs given by the partition.
\end{enumerate}
Here we use a refined version from~\cite{RoSchRL}*{Theorem~2.3}.
\begin{thm}[Regularity Lemma]
	\label{thm:RL}
	For all $\delta_3>0$, $\delta_2\colon \NN \to (0,1]$, and~$t_0\in\NN$ 
	there exists an integer $T_0$ such that for every $n\geq t_0$
	and every $n$-vertex $3$-uniform hypergraph $H=(V,E_H)$ the following holds.
	
	There are integers $t$ and $\l$ with $t_0\leq t\leq T_0$ and $\l\leq T_0$
	and there exists a vertex partition $V_0\dcup V_1\dcup\dots\dcup V_t=V$ 
	and for all $1\leq i<j\leq t$ there exists 
	a partition 
	\[
		\cP^{ij}=\{P^{ij}_\alpha=(V_i\dcup V_j,E^{ij}_\alpha)\colon 1\leq \alpha \leq \l\}
	\] 
	of the edge set of the complete bipartite graph $K(V_i,V_j)$ satisfying the following properties
	\begin{enumerate}[label=\rmlabel]
		\item $|V_0|\leq \delta_3 n$ and $|V_1|=\dots=|V_t|$,
		\item for all $1\leq i<j\leq t$ and $\alpha\in [\l]$ the bipartite graph $P^{ij}_\alpha$ is $(\delta_2(\l),1/\l)$-regular, and
		\item\label{RL:3} $H$ is $\delta_3$-regular w.r.t.\ $P^{ijk}_{\alpha\beta\gamma}$
			for all but at most $\delta_3t^3\l^3$ tripartite graphs 
			\begin{equation}\label{eq:triad}
				P^{ijk}_{\alpha\beta\gamma}=P^{ij}_\alpha\dcup P^{ik}_\beta\dcup P^{jk}_\gamma=(V_i\dcup V_j\dcup V_k, E^{ij}_\alpha\dcup E^{ik}_{\beta}\dcup E^{jk}_{\gamma})\,,
			\end{equation}
			with $1\leq i<j<k\leq t$ and $\alpha$, $\beta$, $\gamma\in[\l]$.
	\end{enumerate}
\end{thm}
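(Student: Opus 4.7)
The plan is to follow the classical index-boosting scheme, adapted to $3$-uniform hypergraphs in the refined cylinder form used in~\cites{RoSchRL,Gow06,NPRS09}. The proof will produce a sequence of admissible partition pairs $(\{V_i\},\{\cP^{ij}\})$ of increasing refinement, driven by a mean-square density (index) function, and will stop as soon as property~\ref{RL:3} is met; the bounds on $t$ and $\ell$ are then absorbed into a single tower-type constant $T_0$.

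First I would attach to every admissible partition pair the index
\[
\mathrm{ind}(\{V_i\},\{\cP^{ij}\})
=\sum_{i<j<k}\sum_{\alpha,\beta,\gamma}\frac{|\cK_3(P^{ijk}_{\alpha\beta\gamma})|}{|V|^3}\,d(H\,|\,P^{ijk}_{\alpha\beta\gamma})^2,
\]
which lies in $[0,1]$, and establish two properties. First, any common refinement of the vertex partition and the bipartite edge-partitions can only raise the index; this is a Cauchy--Schwarz computation carried out triad-by-triad, whose error term is controlled by applying the triangle-counting estimate~\eqref{eq:TCL} to the $(\delta_2(\ell),1/\ell)$-regular pieces. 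Second, whenever~\ref{RL:3} fails, one may choose inside each of the $\ge\delta_3t^3\ell^3$ bad triads a witnessing tripartite subgraph $Q^{ijk}_{\alpha\beta\gamma}\subseteq P^{ijk}_{\alpha\beta\gamma}$ with $|d(H\,|\,Q^{ijk}_{\alpha\beta\gamma})-d(H\,|\,P^{ijk}_{\alpha\beta\gamma})|>\delta_3$, and the partition they collectively induce raises the index by at least some $c(\delta_3)>0$ (again by a triad-wise variance computation).

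Next I would convert this potential refinement into an \emph{admissible} one. The witnessing subgraphs refine each $P^{ij}_\alpha$ into at most $2^{O(t\ell)}$ pieces, which are in general not regular at the graph level. I would then apply the bipartite (cylinder) regularity lemma with parameter $\delta_2(\ell')$ for a suitably large new $\ell'$ to subdivide all these pieces simultaneously into $(\delta_2(\ell'),1/\ell')$-regular bipartite graphs. A small number of leftover edges and a rebalancing of vertex classes to guarantee $|V_1|=\dots=|V_t|$ are absorbed into the enlarged exceptional set $V_0$, whose size stays below $\delta_3 n$ provided the initial $t$ is chosen large enough. Since all these operations only refine the current pair, the gain of $c(\delta_3)$ secured in the previous step survives.

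Finally, starting from the trivial pair with $t=t_0$, $\ell=1$, and $\cP^{ij}=\{K(V_i,V_j)\}$, each failure of~\ref{RL:3} produces an index gain of at least $c(\delta_3)$, so after at most $\lceil 1/c(\delta_3)\rceil$ rounds the desired partition is reached, and the resulting $t$ and $\ell$ are bounded by a tower-type function of $\delta_3$, $t_0$, and the function $\delta_2(\cdot)$, giving~$T_0$. The main obstacle is precisely this interaction between the growing index of the bipartite partition and the function-valued parameter $\delta_2$: because $\ell$ may increase dramatically at each step, the regularity error supplied by~\eqref{eq:TCL} must remain much smaller than $c(\delta_3)$ \emph{at every $\ell$ that appears}, and this is what forces the theorem to quantify over functions $\delta_2\colon\NN\to(0,1]$ rather than over a single constant. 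A secondary technicality is that all $\binom{t}{2}$ bipartite partitions must be refined at once while keeping a common value of $\ell$, which is why the coloured/cylinder variant of Szemer\'edi's regularity lemma from~\cite{NPRS09} is invoked rather than the usual one.
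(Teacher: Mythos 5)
You should first be aware that the paper does not prove Theorem~\ref{thm:RL} at all: it is imported verbatim as a ``refined version'' of \cite{RoSchRL}*{Theorem~2.3}, with the remark that the weaker notion of regularity from Definition~\ref{def:reg} suffices by \cite{Gow06} and \cite{NPRS09}*{Corollaries~2.1 and~2.3}. So there is no in-paper argument to compare against; what you have written is a sketch of the external proof. As such a sketch, it correctly reproduces the standard skeleton used in \cites{FR,RoSchRL,Gow06}: a mean-square density (index) over triads, monotonicity under refinement by Cauchy--Schwarz, an index increment of $c(\delta_3)>0$ extracted from the witnesses of irregularity when~\ref{RL:3} fails, re-regularization at the graph level, and termination after $O(1/c(\delta_3))$ rounds because the index is bounded. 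The observation about why $\delta_2$ must be a function of $\l$ rather than a constant is exactly the right one.

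That said, two steps you pass over quickly are where essentially all of the technical work in \cite{RoSchRL} lives, and as written they are gaps. First, the graph-level regularization is self-referential: the new pieces must be $(\delta_2(\l'),1/\l')$-regular \emph{with density exactly $1/\l'$}, where $\l'$ is only determined by the regularization itself; a single application of Szemer\'edi's lemma does not deliver this, and one needs the iterated ``$\eps(\l)$-regularity'' argument of Frankl and R\"odl together with a density-equalization step. Second, your claim that the secured index gain ``survives'' because ``all these operations only refine the current pair'' is not literally true: equalizing densities, discarding leftover edges, and enlarging $V_0$ modify the bipartite graphs rather than merely refining them, so one must separately verify that these perturbations cost at most, say, $c(\delta_3)/2$ of index --- this is a quantitative estimate, not a formal consequence of refinement. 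Finally, if you intend to derive the theorem exactly as stated (with the single-subgraph regularity of Definition~\ref{def:reg}), you should either run the increment argument directly with that notion, as in \cite{Gow06}, or prove the stronger $(\delta,d,r)$-version of \cite{RoSchRL} and then invoke the equivalences of \cite{NPRS09}; the sketch currently alludes to both routes without committing to either.
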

Owing to their special r\^ole we shall refer to the tripartite graphs considered in~\eqref{eq:triad} as \emph{triads}.
In the formulation of the regularity lemma in~\cite{RoSchRL} a more refined version of hypergraph regularity was used. 
However, owing to the results from~\cite{Gow06} and~\cite{NPRS09}*{Corollaries~2.1 and~2.3} for our purposes here the version 
from Definition~\ref{def:reg} suffices. 

Similarly as in other proofs based on the regularity method it will
be convenient to ``clean'' the regular partition provided by Theorem~\ref{thm:RL}. In particular, 
we shall disregard hyperedges of $H$ that ``belong'' to \emph{irregular} or \emph{sparse} triads of the regular partition.
Since by property~\ref{RL:3} \emph{globally} 
$H$ is not regular for up to at most $\delta_3t^3\l^3$ triads, 
a simple averaging argument shows that 
for $m=m(\delta_3)$  (with $m\to\infty$ as $\delta_3\to0$) there exist 
vertex classes $V_{i_1},\dots,V_{i_m}$ such that for all fixed $1\leq a<b<c\leq m$
\emph{locally} $H$ is $\delta_3$-regular 
for all but at most $\sqrt{\delta_3}\l^3$ triads $P^{i_ai_bi_c}_{\alpha\beta\gamma}$ with $\alpha$, $\beta$, $\gamma\in[\l]$.
After removal of the hyperedges belonging to irregular or sparse triads, these considerations 
lead to the following immediate consequence of Theorem~\ref{thm:RL}.

\begin{cor}
	\label{cor:TuRL}
	For every $d_3>0$, $\delta_3>0$ and  $m\in\NN$, and every function $\delta_2\colon \NN \to (0,1]$,
	there exist integers~$T_0$ and $n_0$ such that for every $n\geq n_0$
	and every $n$-vertex $3$-uniform hypergraph $H=(V,E)$ the following holds.
	
	There exists a subhypergraph $\hat H=(\hat V,\hat E)\subseteq H$, a positive integer $\l\leq T_0$,
	a vertex partition $V_1\dcup\dots\dcup V_m=\hat V$, 
	and for all $1\leq i<j\leq m$ there exists 
	a partition of pairs $\cP^{ij}=\{P^{ij}_\alpha=(V_i\dcup V_j,E^{ij}_\alpha)\colon 1\leq \alpha \leq \l\}$ 
	of $K(V_i,V_j)$ satisfying the following properties
	\begin{enumerate}[label=\rmlabel]
		\item $|V_1|=\dots=|V_m|\geq (1-\delta_3)n/T_0$,
		\item for every $1\leq i<j\leq m$ and $\alpha\in [\l]$ the bipartite graph $P^{ij}_\alpha$ is $(\delta_2(\l),1/\l)$-regular,
		\item $\hat H$ is $\delta_3$-regular w.r.t.\ $P^{ijk}_{\alpha\beta\gamma}$
			for \emph{all} tripartite graphs $P^{ijk}_{\alpha\beta\gamma}$ 
			with $1\leq i<j<k\leq m$ and $\alpha$, $\beta$, $\gamma\in[\l]$, where either $d(\hat H\,|\,P)=0$ or
			$d(\hat H\,|\,P)\geq d_3$, and
		\item\label{TuRL:4} for every $1\leq i<j<k\leq m$ we have 
			\[
				e_{\hat H}(V_i,V_j,V_k)\geq e_{H}(V_i,V_j,V_k)-(d_3+\delta_3)|V_i||V_j||V_k|\,.
			\]
	\end{enumerate}
	Moreover, if the vertex set $V=[n]$ then we can ensure $\max(V_i)<\min(V_{i+1})$
	for every $i=1,\dots,m-1$.
\end{cor}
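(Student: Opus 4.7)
The plan is to derive the corollary from Theorem~\ref{thm:RL} by a standard cleaning procedure. Given $d_3$, $\delta_3$, $m$, and $\delta_2$, I would choose an auxiliary parameter $\delta_3^*\in(0,\delta_3]$ small enough so that $4\sqrt{\delta_3^*}\leq \delta_3$ and $8\sqrt{\delta_3^*}\,m^3<1$, define $\delta_2^*(\ell)=\min\{\delta_2(\ell),\,\ell^{-3}\}$, pick $t_0\geq m$ (sufficiently large in terms of $m$ and $\delta_3$ for the ordered case below), and apply Theorem~\ref{thm:RL} with parameters $\delta_3^*$, $\delta_2^*$, and $t_0$ to obtain the constant $T_0$. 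For any hypergraph $H$ on $n\geq n_0:=T_0$ vertices the lemma yields a vertex partition $V_0\dcup V_1\dcup\dots\dcup V_t=V$ with $t_0\leq t\leq T_0$ and bipartite edge partitions of size $\ell$ such that all but at most $\delta_3^*t^3\ell^3$ triads are $\delta_3^*$-regular with respect to $H$.

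Next I would localise the irregularity. Let $\mathcal{B}$ denote the $3$-uniform auxiliary hypergraph on $[t]$ whose edges are those triples $\{i,j,k\}$ for which more than $\sqrt{\delta_3^*}\,\ell^3$ of the $\ell^3$ triads $P^{ijk}_{\alpha\beta\gamma}$ fail to be $\delta_3^*$-regular with respect to $H$. Markov's inequality gives $|\mathcal{B}|\leq \sqrt{\delta_3^*}\,t^3$, so a uniformly random $m$-subset of $[t]$ hits $\mathcal{B}$ with probability at most $|\mathcal{B}|\binom{m}{3}/\binom{t}{3}\leq 8\sqrt{\delta_3^*}\,m^3<1$. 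There thus exist indices $i_1<\dots<i_m$ in $[t]$ such that for every $1\leq a<b<c\leq m$ at most $\sqrt{\delta_3^*}\,\ell^3$ of the triads $P^{i_ai_bi_c}_{\alpha\beta\gamma}$ are irregular. Relabel the chosen classes $V_{i_1},\dots,V_{i_m}$ as $V_1,\dots,V_m$.

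I would then construct $\hat H$ by deleting from $H$ all hyperedges lying in $\cK_3(P^{ijk}_{\alpha\beta\gamma})$ for some triad with $1\leq i<j<k\leq m$ that is either irregular with respect to $H$ or satisfies $d(H\mid P^{ijk}_{\alpha\beta\gamma})<d_3$. Properties~(i) and~(ii) are immediate from Theorem~\ref{thm:RL}. For~(iii) each triad either retains all the hyperedges of $H$ supported on it (so its $\delta_3^*$-regularity and relative density $\geq d_3$ carry over to $\hat H$) or is emptied (giving trivial $0$-regularity). For~(iv) the hyperedges removed from a slab $V_a\times V_b\times V_c$ split into two contributions: those in sparse triads, amounting to at most $d_3|V_a||V_b||V_c|$ because $\sum_{\alpha,\beta,\gamma}|\cK_3(P^{abc}_{\alpha\beta\gamma})|=|V_a||V_b||V_c|$, and those in the at most $\sqrt{\delta_3^*}\ell^3$ irregular triads, each of which contains at most $(\ell^{-3}+3\delta_2^*(\ell))|V_a||V_b||V_c|\leq 4\ell^{-3}|V_a||V_b||V_c|$ triangles by~\eqref{eq:TCL}; summing gives an extra contribution of at most $4\sqrt{\delta_3^*}|V_a||V_b||V_c|\leq\delta_3|V_a||V_b||V_c|$, as required.

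To handle the ordered case $V=[n]$, I would preprocess by partitioning $[n]$ into $t_0$ consecutive equal-sized intervals and invoke the standard variant of Theorem~\ref{thm:RL} whose output partition refines a prescribed initial equipartition, so that each class $V_j$ is contained in a single interval. The random selection in the second step would be modified to first pick $m$ of the $t_0$ intervals and then one class from each; taking $t_0$ large enough makes this conditioned selection still avoid $\mathcal{B}$ by the same union-bound computation. The natural ordering of the intervals then forces $\max V_{i_a}<\min V_{i_{a+1}}$ for every $a<m$. The main obstacle in executing this plan is the per-triple localisation of irregularity, since a straightforward averaging only controls the \emph{total} count of irregular triads whereas~(iv) demands a uniform bound on every one of the $\binom{m}{3}$ slabs; the sparse auxiliary hypergraph $\mathcal{B}$ introduced in the second step is exactly the device that resolves this cleanly.
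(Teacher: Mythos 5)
Your proof is correct and follows essentially the same route as the paper's: apply the regularity lemma with a suitably shrunk $\delta_3$, localise the irregularity via a sparse auxiliary $3$-uniform hypergraph on the cluster indices, find a clean $m$-subset by an expectation/union-bound argument, and then delete hyperedges sitting in irregular or sparse triads. The paper bakes the interval-separation constraint (needed for the ordered conclusion) directly into the auxiliary hypergraph $R$ as a second defining condition, while you handle it by a two-stage random choice of intervals and then clusters; both are fine. One small thing you do more carefully than the written proof: you explicitly replace $\delta_2$ by $\min\{\delta_2(\ell),\ell^{-3}\}$ before invoking the regularity lemma and then use the triangle counting lemma~\eqref{eq:TCL} to bound the loss from the $\leq\sqrt{\delta_3^*}\,\ell^3$ irregular triads, which is exactly what is needed to make property~\ref{TuRL:4} come out with the $\delta_3$-error term; the paper leaves this verification implicit.
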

\begin{proof}
	For the proof of Corollary~\ref{cor:TuRL} (including the moreover-part)
	we shall apply the regularity lemma (Theorem~\ref{thm:RL})
	with $\delta_3'$ sufficiently small  such that 
	\begin{equation}\label{eq:pTuRL}
		\delta_3'<\delta_3^2 \qand \Big(1-11\sqrt{\delta_3'}\Big)\binom{m}{3} > \binom{m}{3}-1
	\end{equation}
	and with the integer $t_0=\max(m,\lceil1/\delta'_3\rceil,21)$ and the given function $\delta_2\colon \NN\to(0,1]$.

	We recall 
	that the hypergraph regularity lemma is proved by iterated refinements
	starting with an arbitrary initial partition. Hence, given the hypergraph $H=(V,E)$ with $V=[n]$ we may split~$V$ 
	initially into $t_0$ equal sized intervals $I_1\dcup\dots\dcup I_{t_0}=[n]$ and 
	then the vertex partition $V_1\dcup\dots\dcup V_t$ provided by the regularity lemma, Theorem~\ref{thm:RL},
	will refine this initial partition of intervals. 
	
	We consider an auxiliary $3$-uniform hypergraph $R=([t],E_R)$ on the vertex set $[t]$, where a hyperedge $\{i,j,k\}$ 
	signifies the following two properties:
	\begin{enumerate}[label=\alabel]
	\item\label{it:pTuRLi} at most $\sqrt{\delta_3'}\l^3$ triads $P^{ijk}_{\alpha\beta\gamma}$ are not $\delta_3'$-regular and
	\item\label{it:pTuRLii} $V_i$, $V_j$, and $V_k$ are contained in three different initial intervals $I_s$. 
	\end{enumerate}
	Property~\ref{RL:3} of Theorem~\ref{thm:RL} and $t\geq t_0\geq 21$
	asserts that at most 
	\[
		\frac{\delta'_3t^3\l^3}{\sqrt{\delta_3'}\l^3}
		=
		\sqrt{\delta'_3}t^3
		<
		7\sqrt{\delta'_3}\binom{t}{3}		
	\]
	triples $\{i,j,k\}$ fail to satisfy~\ref{it:pTuRLi}.
	Moreover, at most 
	\[
		t_0\cdot\binom{t/t_0}{2}t<\frac{4}{t_0}\binom{t}{3}
	\]
	triples are excluded because of~\ref{it:pTuRLii}. Consequently, owing to the choice of $t_0\geq 1/\delta'_3$ 
	we infer that auxiliary hypergraph $R$ has density at least $(1-11\sqrt{\delta'_3})$.
	The choice of $\delta'_3$ in~\eqref{eq:pTuRL} entails that $R$ contains a 
	clique on $m$ vertices, say, $i_1,\dots,i_m$. Again appealing to~\ref{it:pTuRLii}
	of the construction of $R$ we may assume that there are indices $1\leq j_1<\dots<j_m\leq t_0$
	such that $V_{i_k}\subseteq I_{j_k}$ for every $k\in [m]$ and, consequently, these vertex sets
	satisfy the moreover-part of the Corollary~\ref{cor:TuRL}.

	In order to construct the desired hypergraph $\hat H$ we remove hyperedges of $H$ that are contained in triads with density less than
	$d_3$, i.e., hyperedges $e\in E_H\cap\cK_3(P^{ijk}_{\alpha\beta\gamma})$ when
	 $d(H\,|\,P^{ijk}_{\alpha\beta\gamma})<d_3$.
	 Moreover, we remove hyperedges of $H$ that are contained in triads~$P^{ijk}_{\alpha\beta\gamma}$ for which 
	 $H$ is not $\delta_3$-regular and let $H_0$ be the hypergraph that remained after these deletions. 
	 Finally it follows that setting $\hat H$ to the subhypergraph of $H_0$ induced on $V_{i_1}\dcup\dots\dcup V_{i_m}$
	 has the desired properties.
\end{proof}

We shall use a so-called \emph{counting/embedding lemma}, which allows us to embed hypergraphs of fixed isomorphism type 
into appropriate and sufficiently regular and dense triads of the regular partition provided by the regularity lemma. 
The following statement is a direct consequence of~\cite{NPRS09}*{Corollary~2.3}.
\begin{thm}[Embedding Lemma]
	\label{thm:EL}
	For every $3$-uniform hypergraph $F=(V_F,E_F)$ with vertex set $V_F=[f]$
	and every $d_3>0$ there exists $\delta_3>0$, and functions 
	$\delta_2\colon \NN\to(0,1]$ and  $N\colon \NN\to\NN$
	such that the following holds for every $\l\in\NN$.
	
	Let $P\!=\!(\bigdcup_{i\in[f]}V_i, E_P)$ be a $(\delta_2(\l),\frac{1}{\l})$-regular, $f$-partite graph
	with $|V_1|\!=\!\dots\!=\!|V_f|\geq N(\l)$ and let $H$ be an $f$-partite, $3$-uniform hypergraph
	satisfying for every edge $ijk\in E_F$
	\begin{enumerate}[label=\alabel]
		\item\label{EL:a} $H$ is $\delta_3$-regular w.r.t.\ to the tripartite graph $P[V_i\dcup V_j\dcup V_k]$ and 
		\item $d(H\,|\,P[V_i\dcup V_j\dcup V_k])\geq d_3$
	\end{enumerate} 
	then $H$ contains a copy of $F$, where for every $i\in [f]=V_F$ the image of $i$ is contained in~$V_i$.
\end{thm}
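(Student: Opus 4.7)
The plan is to deduce Theorem~\ref{thm:EL} from the hypergraph counting lemma~\cite{NPRS09}*{Corollary~2.3}. The strategy is to show that the hypotheses of our theorem fit the input format of that counting lemma after a mild translation of the regularity notion, and then to invoke it to obtain a strictly positive count of partite-respecting copies of~$F$. Concretely, I would read off from~\cite{NPRS09}*{Corollary~2.3}, as a function of~$F$ and~$d_3$, a constant $\delta_3^*>0$, a function $\delta_2^*\colon\NN\to(0,1]$, and a threshold $N^*\colon\NN\to\NN$ so that under the appropriate regularity and density assumptions the number of labelled, partite-respecting copies of~$F$ in~$H$ is at least $c(F,d_3,\l)\prod_{i\in[f]}|V_i|$ for some quantity $c(F,d_3,\l)>0$. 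Setting $\delta_3=\delta_3^*$, $\delta_2=\delta_2^*$, and $N=N^*$ would then suffice, since any positive count yields at least one copy of~$F$ respecting the partite structure, which is exactly what the theorem demands.

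The technical work consists in verifying the hypotheses. The bipartite regularity condition of Theorem~\ref{thm:EL} asks that each $P[V_i\dcup V_j]$ is $(\delta_2(\l),1/\l)$-regular, which is exactly the classical graph-regularity input expected by~\cite{NPRS09}. On the hypergraph side, however, Definition~\ref{def:reg} normalises the defect $\big||E_H\cap\cK_3(Q)|-d_3|\cK_3(Q)|\big|$ by $|\cK_3(P)|$ rather than by $|\cK_3(Q)|$, which is a slightly weaker form than the one sometimes adopted in the literature. Here the remark made by the authors just after the statement of Theorem~\ref{thm:RL} is crucial: the results of~\cite{Gow06} and~\cite{NPRS09}*{Corollary~2.1} combine to show that this weaker notion of regularity still implies the stronger form of triad-regularity needed to invoke~\cite{NPRS09}*{Corollary~2.3}, at the cost of only a quantitative adjustment of~$\delta_3$ and~$\delta_2$.

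The main obstacle is therefore careful parameter bookkeeping rather than any substantial new idea: one has to arrange a hierarchy roughly of the form $f,1/d_3\ll 1/\delta_3\ll\l\ll 1/\delta_2(\l)\ll N(\l)$ so that every error term incurred in the translation from Definition~\ref{def:reg} into the framework of~\cite{NPRS09} and in the counting lemma itself remains negligible compared to the main term $c(F,d_3,\l)\prod_{i\in[f]}|V_i|$. Once this is arranged, \cite{NPRS09}*{Corollary~2.3} directly produces the required copy of~$F$ with vertex $i\in[f]$ placed inside~$V_i$, and the theorem follows.
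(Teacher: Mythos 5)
Your proposal matches the paper exactly: the authors give no independent proof of Theorem~\ref{thm:EL} but state that it is a direct consequence of~\cite{NPRS09}*{Corollary~2.3}, with the translation from the weaker regularity notion of Definition~\ref{def:reg} handled by the results of~\cite{Gow06} and~\cite{NPRS09}*{Corollaries~2.1 and~2.3}, precisely as you describe. Your outline of the parameter bookkeeping is a reasonable elaboration of that citation, so there is nothing further to compare.
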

In an application of Theorem~\ref{thm:EL} the tripartite graphs $P[V_i\dcup V_j\dcup V_k]$ in~\ref{EL:a}
will be given by triads  $P^{ijk}_{\alpha\beta\gamma}$ from the partition given by the regularity lemma.

We shall consider weakly quasirandom hypergraphs $H$ of density $\mu$ bounded away from $0$.
In particular, this assumption implies that in any regular partition provided by Theorem~\ref{thm:RL},
we have the property that the density of $H$ induced on any three vertex classes $V_i$, $V_j$, and~$V_k$ 
will be close to $\mu$. For fixed $i$, $j$ and $k$ 
this only implies that $d(H\,|\,P^{ijk}_{\alpha\beta\gamma})\sim\mu$ on the average taken over all 
$\l^3$ choices of $\alpha$, $\beta$, and $\gamma\in[\l]$. 
This, however, gives only little information on the density of $H$ w.r.t.\ a particular $P^{ijk}_{\alpha\beta\gamma}$.
Consequently, for the proof of Theorem~\ref{K4-} further analysis is required to arrive at a situation ready for an application of  
Theorem~\ref{thm:EL}. This will be the focus in Section~\ref{sec:K4-}.

\section{Embedding \texorpdfstring{$K_4^{(3)-}$}{K4-}}
\label{sec:K4-}

In this section we deduce Theorem~\ref{K4-}. The proof will be based on the regularity lemma for hypergraphs in form of 
Corollary~\ref{cor:TuRL} and the embedding lemma (Theorem~\ref{thm:EL}). 
Below we reduce the proof of Theorem~\ref{K4-} to a lemma (see Lemma~\ref{K4-modif} below)
which locates in a sufficiently regular partition of a weakly quasirandom hypergraph with density 
$>1/4$ a collection of triads that are ready for an application of the embedding lemma for~$K_4^{(3)-}$.

\begin{proof}[Proof of Theorem~\ref{K4-}]
Given $\eps>0$ we have to find appropriate $\eta>0$ and $n_0\in\NN$. For this purpose we start by choosing some 
auxiliary constants obeying the hierarchy 
\[
\delta_3\ll d_3, m^{-1}\ll\eps\,.
\]
For these choices of $\delta_3$ and $d_3$ and $F=K_4^{(3)-}$ we appeal to Theorem~\ref{thm:EL} and 
obtain $\delta_2\colon\NN\to\NN$ and $N\colon \NN\to\NN$. Without loss of generality we may assume 
that for all $\ell\in\NN$ we have
\[
\delta_2(\ell)\ll\ell^{-1}, \eps\,.
\]
Applying Corollary~\ref{cor:TuRL} to $d_3$, $\delta_3$, $m$, and $\delta_2$ we 
get two integers $n'_0$ and $T_0$. Now we claim that any 
\[
\eta\ll T_0^{-1} \qquad \text{ and } \qquad n_0\gg n'_0\,, T_0\cdot N(T_0)
\]
are as desired.

To justify this, we let any weakly $(1/4+\eps, \eta)$-quasirandom hypergraph $H$ on $n\ge n_0$ vertices be given. Since $n\ge n'_0$ holds as well, we may apply Corollary~\ref{cor:TuRL}, thus getting a subhypergraph $\hat H\subseteq H$ with vertex partition $\hat V=V_1\dcup\dots\dcup V_m$ and edge partitions
$\cP^{ij}=\{P^{ij}_{\alpha}\colon \alpha\in[\l]\}$ of $K(V_i,V_j)$ for $1\le i<j\le m$.

In view of the embedding lemma (Theorem~\ref{thm:EL}) the task that remains to be done is now reduced to the task of locating four vertex classes
$V_{i_1},\dots,V_{i_4}$ with $i_1< i_2< i_3<i_4$, $\max(V_{i_a})<\min(V_{i_{a+1}})$ for $a=1, 2, 3$, and six bipartite graphs $P^{ab}\in \cP^{i_ai_b}$ for $1\le a<b\le4$
from the regular partition, such that at least three of the $\binom{4}{3}$ triads 
\[
	P^{abc}=P^{ab}\dcup P^{ac}\dcup P^{bc}
\] 
with $1\le a<b<c\le4$
are \emph{dense} and \emph{regular}, i.e., $d(H\,|\,P^{abc})\geq d_3$ and $H$ is $\delta_3$-regular w.r.t.~$P^{abc}$. 
For the moreover-part, we also have to make sure that we embed the apex vertex of $K_4^{(3)-}$ either into $V_{i_1}$ or into $V_{i_4}$. This will be rendered by Lemma~\ref{K4-modif} (stated below).

In fact due to property~\ref{TuRL:4} of Corollary~\ref{cor:TuRL}
and the weak quasirandomness of $H$ given by the assumption of Theorem~\ref{K4-} (see~\eqref{wqr-abc}) we have 
\begin{align}
		e_{\hat H}(V_i,V_j,V_k) &\geq \left(\frac{1}{4}+\eps\right)|V_i||V_j||V_k|-(d_3+\delta_3)|V_i||V_j||V_k|-7\eta n^3 \nonumber\\
			&\geq \left(\frac{1}{4}+\frac{\eps}{2}\right)|V_i||V_j||V_k|\,,\label{eq:hHqr}
\end{align}
where the last step exploits $d_3, \delta_3\ll\eps$ and $\eta\ll T_0^{-1}$.

Moreover, since every triad $P^{ijk}_{\alpha\beta\gamma}$
is $(\delta_2(\l),1/\l)$-regular (as a tripartite graph), the triangle counting lemma for graphs (see~\eqref{eq:TCL})
asserts that it spans at most $(1/\ell^3+3\delta_2(\l))|V_i||V_j||V_k|$ triangles. By our choice of the function $\delta_2$
we deduce from~\eqref{eq:hHqr} 
that for every fixed $1\leq i<j<k\leq m$ at least 
\begin{equation}\label{eq:hHqrA}
	\frac{\left(1/4+\eps/2\right)}{(1+3\delta_2(\l)\l^3)}\l^3>\left(\frac{1}{4}+\frac{\eps}{4}\right)\l^3
\end{equation}
triads $P^{ijk}_{\alpha\beta\gamma}$ satisfy $d(H\,|\,P^{ijk}_{\alpha\beta\gamma})\geq d_3$.

For fixed $1\leq i<j<k\leq m$ we consider an auxiliary tripartite $3$-uniform hypergraph 
$\cA^{ijk}$ with vertices corresponding to bipartite graphs from the regular partition
and hyperedges representing dense triads. More precisely, we set 
$V(\cA^{ijk})=\cP^{ij}\dcup\cP^{ik}\dcup\cP^{jk}$
and we include the triple  $P^{ij}_\alpha P^{ik}_\beta P^{jk}_\gamma$ in  $E(\cA^{ijk})$ if
$d(\hat H\,|\,P^{ijk}_{\alpha\beta\gamma})\geq d_3$. 
This way~\eqref{eq:hHqrA}
translates to the assertion, that~$\cA^{ijk}$ contains at least $(1/4+\eps/4)\l^3$ hyperedges.

In Lemma~\ref{K4-modif} we analyse the $\binom{m}{2}$-partite, $3$-uniform
hypergraph~$\cA$ given by the union of all
$\cA^{ijk}$ with $1\leq i<j<k\leq m$.
Note that only $\binom{m}{3}$ of the $\binom{\binom{m}{2}}{3}$ naturally induced  tripartite subhypergraphs 
of~$\cA$ span any hyperedges. Lemma~\ref{K4-modif} asserts that such a hypergraph~$\cA$ contains three hyperedges 
on six vertices, which translates back to four vertex classes
$V_{i_1},\dots,V_{i_4}$ and six bipartite graphs $P^{ab}\in \cP^{i_ai_b}$ for $1\le a<b\le4$
from the regular partition of $\hat H$, such that at least three of the four triads $P^{abc}$ with $1\le a<b<c\le4$
satisfy $d(\hat H\,|\,P^{abc})\geq d_3$. Since $\hat H$ was $\delta_3$-regular for any triad, this shows that 
the assumptions of the embedding lemma, Theorem~\ref{thm:EL}, are met for $F=K_4^{(3)-}$
and, therefore, $\hat H\subseteq H$ 
contains a copy of $K_4^{(3)-}$. We also note that the moreover-part of Lemma~\ref{K4-modif} 
together with the moreover-part of Corollary~\ref{cor:TuRL} implies that there exists indeed a 
copy of $K_4^{(3)-}$ in $H=([n],E)$ with the  apex vertex either in the front or at the end.
This concludes the reduction of Theorem~\ref{K4-} to Lemma~\ref{K4-modif} (where $\eps$ corresponds
to $\eps/4$ in the reduction above).
\end{proof}

\begin{lemma}\label{K4-modif}
For every $\eps>0$ there exists an integer $m$ such  that the following holds.
If $\cA$ is an $\binom{m}{2}$-partite $3$-uniform hypergraph with 
\begin{enumerate}[label=\rmlabel]
\item nonempty vertex classes $\cP^{ij}$ for 
$1\le i<j\le m$ such that
\item for each triple $1\le i<j<k\le m$ the restriction $\cA^{ijk}$ of $\cA$ to $\cP^{ij}\cup \cP^{ik}\cup \cP^{jk}$ 
contains at least $\bl\tfrac 14+\eps\br |\cP^{ij}|\,|\cP^{ik}|\,|\cP^{jk}|$ 
triples,
\end{enumerate}
then there are four distinct indices $i_1$, $i_2$, $i_3$, and $i_4$ from~$[m]$ together 
with six vertices $P^{ab}\in \cP^{i_ai_b}$ for $1\le a<b\le4$ such that 
$P^{12}P^{14}P^{24}$, $P^{13}P^{14}P^{34}$, and~$P^{23}P^{24}P^{34}$ are triples of $\cA$. 

Moreover, there exists such a configuration with
\[
	i_4=\max(i_1, i_2, i_3, i_4) \qquad \text{or} \qquad i_4=\min(i_1, i_2, i_3, i_4)\,.
\]
\end{lemma}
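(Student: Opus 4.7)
The plan is to reduce Lemma~\ref{K4-modif} to Theorem~\ref{sync}. Since reversing the linear order on $[m]$ preserves the hypotheses while interchanging the two conclusions permitted by the moreover clause, we have two symmetric shots at the problem, via apex-$\max$ and apex-$\min$ auxiliary graphs.

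Fix $i_4 = m$ and set $V_a := \cP^{am}$ for $a \in [m-1]$. Define the auxiliary $(m-1)$-partite graph $G$ on $\bigdcup_{a<m} V_a$ by joining $P^{am} \in V_a$ and $P^{bm} \in V_b$ (for $a < b < m$) iff there exists $P^{ab} \in \cP^{ab}$ with $(P^{ab}, P^{am}, P^{bm}) \in \cA^{abm}$. A triangle in $G$ spanning three distinct classes $V_{i_1}, V_{i_2}, V_{i_3}$ yields the configuration of the lemma with apex $i_4 = m$: the three triangle vertices serve as $P^{14}, P^{24}, P^{34}$, while the witnesses of its three edges serve as $P^{12}, P^{13}, P^{23}$.

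To produce such a triangle I would apply Theorem~\ref{sync} to $G$ restricted to a Ramsey-uniform subset of classes, extracted by a preliminary hypergraph-Ramsey argument on triples of $[m]$ that makes the densities and coarse codegree profiles of the various $\cA^{ijk}$ uniform. The hypothesis of Theorem~\ref{sync} is the mean-square-degree estimate
\[
\sum_{P \in V_a} d_{V_b}(P)^2 \ge \bl\tfrac14 + \eps'\br|V_a|\,|V_b|^2 \qquad (\ast)
\]
for every pair of remaining classes. Setting $N(P^{am}, P^{bm}) := |\{P^{ab} : (P^{ab}, P^{am}, P^{bm}) \in \cA^{abm}\}|$, the hyperedge bound $|\cA^{abm}|\ge\rho|\cP^{ab}||V_a||V_b|$ (with $\rho := \tfrac14+\eps$) summed over pairs only gives the $L^1$-estimate $|E(G_{ab})|\ge\rho|V_a||V_b|$, and Cauchy--Schwarz alone yields $\sum d^2 \ge \rho^2|V_a||V_b|^2$, short of $(\ast)$.

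The main obstacle is closing this $\rho^2$-to-$\rho$ gap by exploiting the hypergraph structure of $\cA^{abm}$ itself, not merely its projection onto $V_a \times V_b$. I expect the argument to proceed via a dichotomy on the codegree distribution $N(\cdot,\cdot)$. In the \emph{spread} regime, where $N$ is supported on many pairs each with modest codegree, the actual edge density of $G_{ab}$ is boosted beyond $\tfrac12$ by the factor $|\cP^{ab}|$, and $(\ast)$ follows directly from the resulting degree excess. In the \emph{concentrated} regime, where $\cA^{abm}$ is close to a product $\cP^{ab} \times S^{abm}$ with $S^{abm} \subseteq V_a \times V_b$, the inequality $|S^{abm}| \le |\pi_1(S^{abm})| \cdot |\pi_2(S^{abm})|$ forces one of the projections to cover a $\sqrt{\rho} > \tfrac12$ fraction of its ambient rectangle. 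A second Ramsey pigeonhole that uniformises the concentration direction across the index set then allows us---via the order-reversal symmetry---to pass to the apex-$\min$ auxiliary graph, which in this regime is nearly complete on one of its vertex classes, so that $(\ast)$ is immediate there. A final application of Theorem~\ref{sync} to the appropriate (apex-$\max$ or apex-$\min$) auxiliary graph then yields the triangle and completes the proof.
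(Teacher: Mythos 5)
Your reduction to Theorem~\ref{sync} via an apex-$\max$/apex-$\min$ auxiliary graph together with a preliminary Ramsey step is exactly the right framework and matches the paper's. The gap is in how you try to extract the mean-square-degree hypothesis~$(\ast)$. You claim the only available handle is the $L^1$-estimate $|E(G_{ab})|\geq\rho\,|V_a||V_b|$, so that Cauchy--Schwarz on the degree sequence of the projection loses a factor and leaves a $\rho^2$-to-$\rho$ gap which you then attempt to close via a spread/concentrated dichotomy. But this discards the hypergraph information too early. The paper instead projects $\cA^{ijk}$ onto \emph{two} bipartite graphs $Q^i_{jk}$ (on $\cP^{ij}\dcup\cP^{ik}$) and $Q^k_{ij}$ (on $\cP^{ik}\dcup\cP^{jk}$) and notes that the map sending a hyperedge $P^{ij}P^{ik}P^{jk}$ to the pair $\bigl(P^{ij}P^{ik},\,P^{ik}P^{jk}\bigr)$ of projected edges through the \emph{middle} class $\cP^{ik}$ is injective. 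Hence
\[
\sum_{P^{ik}\in\cP^{ik}} d_{Q^i_{jk}}(P^{ik})\, d_{Q^k_{ij}}(P^{ik}) \;\geq\; |E(\cA^{ijk})| \;\geq\; \bigl(\tfrac14+\eps\bigr)|\cP^{ij}|\,|\cP^{ik}|\,|\cP^{jk}|\,,
\]
and a single application of Cauchy--Schwarz to this bilinear sum shows that one of the two bounds $\sum_{P^{ik}} d_{Q^i_{jk}}(P^{ik})^2\geq(\tfrac14+\eps)|\cP^{ij}|^2|\cP^{ik}|$ or $\sum_{P^{ik}} d_{Q^k_{ij}}(P^{ik})^2\geq(\tfrac14+\eps)|\cP^{jk}|^2|\cP^{ik}|$ holds, with no $\rho^2$ loss at all.

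Consequently the dichotomy that needs uniformising by Ramsey is simply ``which of these two inequalities holds for the triple $\{i,j,k\}$.'' On a monochromatic $m_*$-set the surviving inequality is exactly the hypothesis of Theorem~\ref{sync} for the apex-$\max$ graph (if the second inequality won out) or the apex-$\min$ graph (if the first did), and you are done. Your proposed spread/concentrated dichotomy is therefore not needed; moreover, as sketched it is not obviously repairable: both the regime and, within the concentrated regime, the direction in which the projection is large depend on the pair $(a,b)$, so you would need additional Ramsey steps to make the picture uniform, and it is unclear that the pieces would then assemble into the single $(m_*-1)$-partite graph to which Theorem~\ref{sync} applies.
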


\begin{proof} Suppose
\[
m^{-1}\ll m_*^{-1}\ll \eps\,,
\]
and let a $3$-uniform hypergraph $\cA$ as in Lemma~\ref{K4-modif} be given. Notice that each of the three vertices $P^{12}$, $P^{13}$, and $P^{23}$ appears only once in the conclusion, so we may eliminate them from consideration by ``projecting'' the nonempty 
tripartite parts of $\cA$ onto appropriate bipartite graphs. That is to say that for any three distinct indices $i$, $j$, and $k$ from $[m]$ we define a bipartite graph $Q^i_{jk}$ with bipartition $(\cP^{ij}, \cP^{ik})$ by putting an edge between 
$P^{ij}\in \cP^{ij}$ and $P^{ik}\in \cP^{ik}$ if and only if for some $P^{jk}\in \cP^{jk}$ the triple 
$P^{ij}P^{ik}P^{jk}$ belongs to~$E(\cA^{ijk})$.

In the next step of the argument, we colour the $3$-subsets of $[m]$ with two colours, called {\it red} and {\it green}, with the intention of applying Ramsey's Theorem afterwards. So let any three indices $1\le i<j<k\le m$ be given. Each triple 
$P^{ij}P^{ik}P^{jk}$ from $E(\cA^{ijk})$, with $P^{ij}\in \cP^{ij}$, $P^{ik}\in \cP^{ik}$, and $P^{jk}\in \cP^{jk}$, 
gives rise to a unique pair $(P^{ij}P^{ik}, P^{ik}P^{jk})$ of edges $P^{ij}P^{ik}\in E\bl Q^{i}_{jk}\br$ and 
$P^{ik}P^{jk}\in E\bl Q^k_{ij}\br$ and hence our assumption on the density of $\cA^{ijk}$ yields 
\[
\sum_{P^{ik}\in \cP^{ik}}d_{Q^{i}_{jk}}(P^{ik})d_{Q^{k}_{ij}}(P^{ik})\ge \bl\tfrac 14+\eps\br |\cP^{ij}|\,|\cP^{ik}|\,|\cP^{jk}|\,.  
\]  
Thus the Cauchy-Schwarz inequality informs us that at least one of the two estimates
\[
\tag{$*$}
\sum_{P^{ik}\in \cP^{ik}}d^{\phantom{!}2}_{Q^{i}_{jk}}(P^{ik})\ge \bl\tfrac 14+\eps\br |\cP^{ij}|^2\,|\cP^{ik}| 
\]
or
\[ 
\tag{$**$}
\sum_{P^{ik}\in \cP^{ik}}d^{\phantom{!}2}_{Q^{k}_{ij}}(P^{ik})\ge \bl\tfrac 14+\eps\br |\cP^{jk}|^2\,|\cP^{ik}| 
\]
holds. Hence there can arise no clash of colours if we resolve to colour $\{i, j, k\}$ red if $(*)$ fails and green if $(**)$ fails. If both $(*)$ and $(**)$ are valid, the colour of $\{i, j, k\}$ is irrelevant and we make and arbitrary choice. In other words, if $\{i, j, k\}$ ends up being red, then necessarily~$(**)$ holds, whilst if this triple is green, then this indicates the validity of $(*)$.

By Ramsey's Theorem, or more precisely as we may assume the partition relation 
\[
m\longrightarrow (m_*)^3_2\,,
\]
there is a set $X\subseteq [m]$ of size $m_*$ such that all triples from $X$ have the same colour. Due to symmetry it is allowed to assume that this common colour is red, and relabeling our indices if necessary we may further suppose that $X=[m_*]$. We contend that a configuration of the desired kind can be found with $1\le i_1<i_2<i_3<m_*$ and $i_4=m_*$.

To show this, we define an $(m_*-1)$-partite graph $G$ with vertex classes $W_i=\cP^{im_*}$ for~$1\le i<m_*$ by demanding that the restriction of $G$ to $W_i\cup W_j$ be isomorphic to~$Q^{m_*}_{ij}$ whenever $1\le i<j<m_*$. Notice that for such $i$ and $j$ the triple $\{i, j, m_*\}$ is red, whence~$(**)$ implies
\[
\sum_{P\in W_i} d^{\phantom{!}2}_{W_j}(P)\ge \bl\tfrac 14+\eps\br |W_{i}|\,|W_{j}|^2\,.   
\]
As we could have chosen $m_*$ so large that the conclusion of Theorem~\ref{sync} applies to $m_*-1$ and~$\eps$ here in place of $m$ and  
$\eps$ there, 
we may assume that $G$ contains a triangle, say with vertices $P^{14}\in W_{i_1}$, 
$P^{24}\in W_{i_2}$, and $P^{34}\in W_{i_3}$, where $i_1<i_2<i_3$. Now, for example, 
$P^{14}P^{24}$ being an edge of $G$ and hence of $Q^{m_*}_{i_1i_2}$ means that there is some
vertex $P^{12}\in \cP^{i_1i_2}$ such that the triple $P^{12}P^{14}P^{24}$ appears in 
$\cA^{i_1i_2m_*}$. For the same reason, the desired vertices $P^{13}$ and~$P^{23}$ exist.
Thereby Lemma~\ref{K4-modif} and, hence, Theorem~\ref{K4-} is proved.
\end{proof}

\section{Concluding remarks} 
\label{sec:conc_rem}
\subsection{Tur\'an densities of cliques in weakly quasirandom hypergraphs}
Our main result, Theorem~\ref{K4-}, asserts that the weakly quasirandom Tur\'an density 
of $K_4^{(3)-}$ is $1/4$ and many open questions remain. It would be very interesting to
determine $\pivvv(K_4^{(3)})$  or more generally  $\pivvv(K_k^{(3)})$ for arbitrary $k\geq 4$.
We recall a random construction from~\cite{Ro86} which shows that 
\begin{equation}\label{eq:wKk-lb}
	\pivvv(K_k^{(3)})\geq \frac{k-3}{k-2}\,.
\end{equation}
This lower bound is established by considering a random
$(k-2)$-colouring $\phi$ of the pairs~$[n]^{(2)}$, where the colour of each pair is chosen uniformly and 
independently among all $k-2$ colours. Given such a colouring $\phi$ we let $H_{\phi}$ be the 
$3$-uniform hypergraph with vertex set $[n]$ containing only those hyperedges $\{x,	y,z\}$ with 
$1\leq x<y<z\leq n$ that satisfy $\phi(x,y)\neq \phi(x,z)$. One can check that for any fixed $\eta>0$ with high probability 
the hypergraph $H_{\phi}$ is $(\tfrac{k-3}{k-2},\eta)$-quasirandom  for sufficiently 
large~$n$. On the other hand, for any~$k$ vertices $1\leq x_1\leq\dots\leq x_k\leq n$ two of the $k-1$ pairs $\{x_1,x_i\}$
with $i=2,\dots,k$ containing~$x_1$ must have the same colour in $\phi$. Consequently, $x_1,\dots,x_k$ cannot span a clique
and~\eqref{eq:wKk-lb} follows. We believe this construction is optimal for $k=4$ and put forward 
the following conjecture.
\begin{conj}\label{conj1}
	We have $\pivvv(K_4^{(3)})= \frac{1}{2}$.
\end{conj}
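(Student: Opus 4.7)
The plan is to follow the blueprint of Theorem~\ref{K4-}: apply the regularity lemma (Corollary~\ref{cor:TuRL}) to a weakly $(1/2+\eps,\eta)$-quasirandom hypergraph~$H$ and reduce matters to a combinatorial lemma in the style of Lemma~\ref{K4-modif}, but now locating a full $K_4^{(3)}$-pattern rather than a $K_4^{(3)-}$-pattern in the meta-hypergraph~$\cA$. That is, after regularisation, weak quasirandomness combined with the triangle counting estimate~\eqref{eq:TCL} would force each $\cA^{ijk}$ to have density at least $1/2+\eps/4$, and an embedding of $K_4^{(3)}$ in the cleaned hypergraph~$\hat H$ amounts to exhibiting indices $i_1<i_2<i_3<i_4$ together with vertices $P^{ab}\in\cP^{i_ai_b}$ such that all \emph{four} triples $P^{12}P^{13}P^{23}$, $P^{12}P^{14}P^{24}$, $P^{13}P^{14}P^{34}$, and $P^{23}P^{24}P^{34}$ lie in~$\cA$.

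To attack the combinatorial core I would try to imitate the proof of Lemma~\ref{K4-modif}: project each $\cA^{ijk}$ onto bipartite graphs $Q^i_{jk}$ and $Q^k_{ij}$, colour the triples $\{i,j,k\}$ by which alternative of the Cauchy--Schwarz dichotomy holds, and apply Ramsey's theorem to obtain a large monochromatic index set. Within this set, gluing the bipartite graphs $Q^{m_*}_{ij}$ for $i<j<m_*$ into an auxiliary multipartite graph~$G$ and appealing to Theorem~\ref{thm:Fsync} with $k=4$ should produce a $K_4$ in~$G$, because the relevant mean--square degree threshold is $(2/3)^2=4/9$, comfortably below our $1/2+\eps$. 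Such a $K_4$ with vertices $P^{a4}\in\cP^{i_am_*}$ for $a\in\{1,2,3\}$ yields, for each edge $P^{a4}P^{b4}$, some companion $P^{ab}\in\cP^{i_ai_b}$ witnessing a triad of $\cA^{i_ai_bm_*}$, and thereby produces the three triples involving the ``apex'' class $\cP^{\cdot\, m_*}$.

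The genuine difficulty is the fourth triple $P^{12}P^{13}P^{23}\in\cA^{i_1i_2i_3}$. The witnesses $P^{12}$, $P^{13}$, $P^{23}$ supplied by the edges of~$G$ are constrained only to exist individually; nothing in the argument above ties them together. I would try to repair this by weighting the edges of $Q^{m_*}_{ij}$ by their \emph{co-degrees}, i.e., by the size of the sets $S(P^{a4}P^{b4})\subseteq\cP^{i_ai_b}$ of possible companions, and aiming at a supersaturation conclusion: a triangle in~$G$ should determine a large combinatorial box $S(P^{14}P^{24})\times S(P^{14}P^{34})\times S(P^{24}P^{34})$ into which the density of $\cA^{i_1i_2i_3}$ forces a triad.

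The main obstacle is precisely that a density hypothesis on $\cA^{i_1i_2i_3}$ alone does not control triad counts in arbitrary boxes. This appears to require an additional regularisation step for~$\cA$ itself; in other words, one would need to run the regularity method at the meta level of the triads so that each $\cA^{ijk}$ behaves pseudorandomly on the large boxes produced by the weighted $K_4$. Coupling such a second regularisation with a strengthened multipartite theorem that outputs a positive fraction of the $K_4$'s with a prescribed co-degree profile is the decisive technical hurdle I foresee; if this can be arranged, the projection plus Ramsey plus Cauchy--Schwarz scaffolding should close the argument and, by tracking the order of the indices as in Theorem~\ref{K4-}, also deliver an ordered refinement of Conjecture~\ref{conj1}.
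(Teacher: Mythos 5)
This statement is a \emph{conjecture} in the paper, not a theorem: the authors explicitly leave it open, proving only the lower bound $\pivvv(K_4^{(3)})\geq\tfrac12$ via the random colouring construction and remarking that they ``believe this construction is optimal.'' There is therefore no proof in the paper against which your proposal can be checked. What the paper does prove (in the companion work~\cite{RRS-c}, stated here as Theorem~5.3) is the weaker assertion $\piev(K_4^{(3)})=\tfrac12$, which uses the strictly stronger quasirandomness hypothesis of Definition~\ref{12qr}.

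Your analysis of where the direct extension of Theorem~\ref{K4-} breaks down is essentially correct and in fact pinpoints exactly why the authors retreat to the $\ev$-hypothesis. Lemma~\ref{K4-modif} only needs three of the four faces of the $K_4^{(3)}$, so the three ``spoke'' vertices $P^{12}$, $P^{13}$, $P^{23}$ can be eliminated by the projection onto bipartite graphs $Q^i_{jk}$. For $K_4^{(3)}$ all four faces are required, and the witnesses $P^{ab}$ supplied by the edges of your auxiliary graph $G$ are chosen independently for each edge, with no mechanism to force $P^{12}P^{13}P^{23}\in E(\cA^{i_1i_2i_3})$. Your proposed repair, tracking co-degree sets $S(P^{a4}P^{b4})$ and hoping for a triad of $\cA^{i_1i_2i_3}$ inside the resulting box, requires controlling the density of $\cA^{i_1i_2i_3}$ on arbitrary combinatorial boxes; but weak quasirandomness $\Qvvv$ controls densities of $H$ only on \emph{vertex} sets, and after regularisation this translates into control of $\cA^{ijk}$ only on average over all $\l^3$ triads, not on sub-boxes. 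The $\ev$-quasirandomness, by contrast, controls $e(U,X)$ for arbitrary sets of pairs $X\subseteq V^{(2)}$, which is precisely the box-control your argument needs — so it is no accident that the proof of $\piev(K_4^{(3)})=\tfrac12$ goes through. Whether the second-level regularisation and supersaturation you sketch can close the gap under the genuinely weaker $\vvv$-hypothesis is exactly the content of the open Conjecture~\ref{conj1}, and nothing in your proposal resolves it.
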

In~\cite{RRS-c} we establish a weaker version of Conjecture~\ref{conj1}.
This version is based on the following strengthened form of the assumed quasirandom condition.
\begin{dfn}
\label{12qr}
A $3$-uniform hypergraph $H=(V, E)$ is \emph{$(d, \eta, \ev)$-quasirandom}
if for every subset $U\subseteq V$ of vertices and every $X\subseteq V^{(2)}$ set of pairs of $V$
the number $e(U,X)$ of ordered pairs $(u,\{x,x'\})$ satisfying $\{u,x,x'\}\in E$,  $u\in U$, and $\{x,x'\}\in X$
satisfies
\[
	\big|e(U,X)-d|U||X|\big|\leq\eta\,n^3
\]
and we denote by $\ccQ^{(3)}_{\ev}(d,\eta)$ the class of $(d, \eta, \ev)$-quasirandom $3$-uniform hypergraphs.
\end{dfn}
With this definition at hand we define for a $3$-uniform hypergraph $F$ the corresponding quasirandom 
Tur\'an density
\begin{multline*}
	\piev(F)=\sup\bigl\{d\in[0,1]\colon \text{for every $\eta>0$ and $n\in \NN$ there exists an $F$-free,}\\
		\text{$3$-uniform hypergraph $H\in\ccQ_{\ev}(d,\eta)$ with $|V(H)|\geq n$}\bigr\}\,.
\end{multline*}
One can check that for every $k\geq 3$ with high probability 
the hypergraph $H_{\phi}$ defined by 
a random $(k-2)$-colouring $\phi$ above is indeed quasirandom in the sense of Definition~\ref{12qr}, i.e., 
it is 
$(\tfrac{k-3}{k-2},\eta,\ev)$-quasirandom 
for any fixed $\eta>0$ for 
sufficiently large $n$. Consequently, we also have
\begin{equation}\label{eq:12Kk-lb}
	\piev(K_k^{(3)})\geq \frac{k-3}{k-2}\,.
\end{equation}
In~\cite{RRS-c} we establish a matching upper bound for $k=4$ by a proof based on the regularity method for hypergraphs. 
\begin{thm}
	We have $\piev(K_4^{(3)})=\frac{1}{2}$.
\end{thm}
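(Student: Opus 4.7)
The plan is to mirror the proof of Theorem~\ref{K4-} while exploiting the extra control provided by $\ev$-quasirandomness. Since the lower bound $\piev(K_4^{(3)})\ge\tfrac12$ is the $k=4$ case of \eqref{eq:12Kk-lb}, the task reduces to establishing the matching upper bound: every sufficiently large $(\tfrac12+\eps,\eta,\ev)$-quasirandom hypergraph $H$ contains a copy of $K_4^{(3)}$. I would first apply Corollary~\ref{cor:TuRL} to obtain a regularized subhypergraph $\hat H\subseteq H$ with vertex classes $V_1,\dots,V_m$ and bipartite partitions $\cP^{ij}=\{P^{ij}_\alpha:\alpha\in[\l]\}$ of $K(V_i,V_j)$. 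Combined with the embedding lemma (Theorem~\ref{thm:EL}) applied to $F=K_4^{(3)}$, the task reduces to locating four vertex classes $V_{i_1},\dots,V_{i_4}$ and six bipartite graphs $P^{ab}\in\cP^{i_ai_b}$ ($1\le a<b\le4$) such that \emph{all four} triads $P^{ab}P^{ac}P^{bc}$ ($1\le a<b<c\le4$) are simultaneously $\delta_3$-regular with relative density at least $d_3$.

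The essential new input is the $\ev$-quasirandomness. Setting $U=V_k$ and $X=E(P^{ij}_\alpha)$ in Definition~\ref{12qr} and comparing with the triangle counting lemma on the triads $P^{ijk}_{\alpha\beta\gamma}$, one deduces that for every triple $i<j<k$ and every $\alpha\in[\l]$,
\[
\sum_{\beta,\gamma\in[\l]}d(\hat H\mid P^{ijk}_{\alpha\beta\gamma})\ge\bigl(\tfrac12+\tfrac\eps2\bigr)\l^2.
\]
Forming the auxiliary $\binom{m}{2}$-partite $3$-uniform hypergraph $\cA=\bigcup_{i<j<k}\cA^{ijk}$ as in the proof of Theorem~\ref{K4-}, this implies $|E(\cA^{ijk})|\ge(\tfrac12+\tfrac\eps4)\l^3$; the vertex-wise nature of the bound additionally forces every vertex of $\cA^{ijk}$ to have pair-degree at least $(\tfrac12+\tfrac\eps4)\l^2$ in all three directions. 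This is a substantial strengthening of the $\tfrac14+\eps$ bound used in the proof of Theorem~\ref{K4-}.

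The remaining combinatorial step is a $K_4^{(3)}$-analog of Lemma~\ref{K4-modif}: if $\cA$ is a $\binom{m}{2}$-partite $3$-uniform hypergraph with each $\cA^{ijk}$ of density at least $\tfrac12+\eps$, then $\cA$ contains six vertices $P^{ab}\in\cP^{i_ai_b}$ on four indices $i_1<i_2<i_3<i_4$ so that all four triples $P^{ab}P^{ac}P^{bc}$ are hyperedges of~$\cA$. Following the scheme of Lemma~\ref{K4-modif}, I would project one vertex class away via bipartite graphs $Q^i_{jk}$, apply Cauchy--Schwarz to deduce $\sum_x d^2(x)\ge(\tfrac12+\eps')|W_i||W_j|^2$ in some direction, colour the index triples by the direction, and use Ramsey's theorem to pass to a large monochromatic set. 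On this set the mean-square degree bound $\tfrac12+\eps'$ vastly exceeds the $\tfrac14+\eps'$ threshold of Theorem~\ref{sync}, so a quantitative supersaturation version of Theorem~\ref{sync} yields $\Omega(\l^3)$ triangles in the resulting auxiliary graph $G$, each accounting for three of the four required triples.

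The main obstacle is securing the fourth triple $P^{12}P^{13}P^{23}$, which involves only the three ``non-special'' indices and is not directly witnessed by the triangle in $G$. To handle it, I would note that each triangle in $G$ comes with a large family of permissible witness triples $(P^{12},P^{13},P^{23})$---one candidate $P^{ab}$ for each edge of the triangle, with the number of candidates per edge controlled from below by the vertex-wise pair-degree uniformity derived above. A double counting argument exploiting both this uniformity and the density lower bound $|E(\cA^{i_1i_2i_3})|\ge(\tfrac12+\tfrac\eps4)\l^3$ should then force at least one triangle--witness combination to simultaneously place the fourth triple into $\cA^{i_1i_2i_3}$. This completes the $K_4^{(3)}$-configuration and, via Theorem~\ref{thm:EL}, yields the desired copy of $K_4^{(3)}$ in $H$.
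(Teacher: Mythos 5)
The paper does not actually prove this theorem: the lower bound is \eqref{eq:12Kk-lb}, and the upper bound is explicitly deferred to the companion paper~\cite{RRS-c}, so there is no in-paper argument to compare your sketch against. Judging it on its own terms, the preliminary steps are sound. The lower bound is indeed \eqref{eq:12Kk-lb}; Corollary~\ref{cor:TuRL} and Theorem~\ref{thm:EL} reduce the matter to finding six graphs $P^{ab}\in\cP^{i_ai_b}$ with \emph{all four} triads dense and regular; and applying Definition~\ref{12qr} with $U=V_k$ and $X=E(P^{ij}_\alpha)$ correctly gives, for every fixed~$\alpha$, the lower bound $\sum_{\beta,\gamma}d(\hat H\mid P^{ijk}_{\alpha\beta\gamma})\geq(\tfrac12+\tfrac\eps2)\l^2$, hence a high pair-degree for every vertex of the auxiliary hypergraph $\cA^{ijk}$. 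That is genuinely the extra leverage that $\ev$-quasirandomness provides over $\vvv$-quasirandomness, and it is right to try to use it.

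The gap is in the last step, which is the heart of the problem. After projecting to the $Q^i_{jk}$, colouring, Ramseying to a red set, and locating a triangle $P^{14}P^{24}P^{34}$ in the auxiliary graph $G$, the witnesses $P^{12},P^{13},P^{23}$ certifying the three edges of that triangle are produced by existential quantifiers: that $P^{14}P^{24}$ is an edge of $Q^{m_*}_{i_1i_2}$ only says the co-degree of the \emph{pair} $(P^{14},P^{24})$ in $\cA^{i_1i_2m_*}$ is at least one. The vertex-wise pair-degree bound you derived controls $\sum_{\beta,\gamma}$ for a fixed vertex; it gives no lower bound on individual pair co-degrees, so the witness set attached to an edge of the triangle may well be a singleton. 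Your assertion that these witness families are ``controlled from below by the vertex-wise pair-degree uniformity'' is therefore unjustified. Even granting, hypothetically, linear-sized witness sets $A_3\subseteq\cP^{i_1i_2}$, $A_2\subseteq\cP^{i_1i_3}$, $A_1\subseteq\cP^{i_2i_3}$, the hypothesis $|E(\cA^{i_1i_2i_3})|\geq(\tfrac12+\tfrac\eps4)\l^3$ together with all pair-degrees being $\geq(\tfrac12+\tfrac\eps4)\l^2$ still does not force $E(\cA^{i_1i_2i_3})$ to meet the box $A_3\times A_2\times A_1$: a tripartite $3$-uniform hypergraph of density above $1/2$ with high pair-degrees can avoid a prescribed box of three linear-sized sets, so the fourth triple can fail for every witness of every triangle. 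Averaging over the many triangles of $G$ does not by itself repair this; one needs a structural link between the witness families and $\cA^{i_1i_2i_3}$, and no such mechanism is supplied. This is precisely why the $\pivvv(K_4^{(3)-})$ argument via Lemma~\ref{K4-modif} and Theorem~\ref{sync} goes through smoothly (the triad on the three non-apex indices never needs to be controlled), whereas the upper bound $\piev(K_4^{(3)})\leq\tfrac12$ is substantially harder and occupies the separate paper~\cite{RRS-c}.
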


Also for $k>4$ it might be possible that the lower bound given in~\eqref{eq:wKk-lb} (and~\eqref{eq:12Kk-lb})
is best 
possible and we are not aware of any better constructions. However, we remark that 
for $k=6$ there is another construction attaining the same bound. For that we consider a
random two colouring of $[n]^{(2)}$ and let $H$ consist of all triples $\{x,y,z\}$
for which the three pairs $\{x,y\}$, $\{x,z\}$, and $\{y,x\}$ are not all of the same colour. 
Again it is easy to check that  with high probability the hypergraph 
$H$ is $(3/4,\eta)$-quasirandom for every fixed $\eta>0$, while the simplest instance of Ramsey's 
theorem, the so called  ``three in a party of six theorem'', yields that $H$ is $K_6^{(3)}$-free.
It would be intriguing in case both of these constructions would be best possible.

\subsection{Hypergraph with vanishing weakly quasirandom Tur\'an density}
For the classical Tur\'an density $\pi(\cdot)$ Erd\H os~\cite{Er64} characterised
all hypergraphs $F$ with $\pi(F)=0$. Restricting the discussion to 
$3$-uniform hypergraphs, he showed that $\pi(F)=0$ if and only if $F$ is tripartite, i.e., 
$V(F)$ can be partitioned into three classes
such that every hyperedge of~$F$ contains precisely one vertex from each class. 
Since large, complete, and balanced tripartite $3$-uniform hypergraphs have density approaching $2/9$
Erd\H os deduced that if $\pi(F)>0$, then~$\pi(F)\geq 2/9$.

We establish a similar characterisation of $\{F\colon \pivvv(F)=0\}$. Clearly, this 
set contains all tripartite hypergraphs 
and the additional quasirandomness assumption considered here 
enriches this set. In fact, it follows from the 
work in~\cite{KNRS} that in addition to all tripartite hypergraphs 
it contains all
linear $3$-uniform hypergraph~$F$, where we say a hypergraph $F$ is \emph{linear}, 
if any pair of hyperedges shares at most one vertex.
In~\cite{RRS-zero} we obtain the following characterisation of hypergraphs with vanishing weakly quasirandom Tur\'an density.

\begin{thm}\label{zero}
For each $3$-uniform hypergraph $F$, the following are equivalent
\begin{enumerate}[label=\alabel]
\item \label{zero:a}$\pivvv{(F)}=0$.
\item \label{zero:b} There is an enumeration of the vertices of $F$ as $v_1, \ldots, v_f$ together with a colouring of the pairs of vertices of $F$ using the colours red, blue and green such that if for $i<j<k$ the triple $\{v_i,v_j,v_k\}$ is a 
hyperedge of $F$, then $\{v_i,v_j\}$ is red, $\{v_i,v_k\}$ is blue, and $\{v_j,v_k\}$ is green.
\end{enumerate}
\end{thm}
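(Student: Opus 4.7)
The proof splits into two implications: \ref{zero:a}$\,\Rightarrow\,$\ref{zero:b} is proved contrapositively via a random construction, and \ref{zero:b}$\,\Rightarrow\,$\ref{zero:a} uses the regularity method of Section~\ref{sec:regmethod} together with a colour-refined embedding argument.

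For the direction \ref{zero:a}$\,\Rightarrow\,$\ref{zero:b}, suppose $F$ admits no enumeration-plus-pair-colouring of the form described in~\ref{zero:b}. For each~$n$ I would take a uniformly random colouring $\chi\colon[n]^{(2)}\to\{R,B,G\}$ with independent choices on distinct pairs and define $H_\chi$ on the vertex set $[n]$ by declaring $\{x,y,z\}$ with $x<y<z$ to be a hyperedge iff $\chi(\{x,y\})=R$, $\chi(\{x,z\})=B$, and $\chi(\{y,z\})=G$. Every potential triple is a hyperedge with probability $1/27$, and a standard concentration argument (e.g., Azuma applied pair by pair, union-bounded over vertex subsets $U\subseteq[n]$) shows that for any fixed $\eta>0$ the hypergraph $H_\chi$ is weakly $(1/27,\eta)$-quasirandom with probability tending to~$1$ as $n\to\infty$. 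On the other hand, any embedded copy of $F$ in $H_\chi$ would yield an enumeration of $V(F)$ inherited from the order on $[n]$ together with a pullback colouring of $V(F)^{(2)}$ satisfying~\ref{zero:b}, contradicting our assumption on $F$. Hence $H_\chi$ is \emph{deterministically} $F$-free for every $\chi$, so $\pivvv(F)\geq\tfrac{1}{27}>0$.

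For the direction \ref{zero:b}$\,\Rightarrow\,$\ref{zero:a}, fix an enumeration $v_1,\dots,v_f$ of $V(F)$ together with a pair-colouring $c\colon V(F)^{(2)}\to\{R,B,G\}$ witnessing~\ref{zero:b}, and let $d>0$ be arbitrary. Given a weakly $(d,\eta)$-quasirandom hypergraph $H$ on many vertices (with $\eta$ small), apply Corollary~\ref{cor:TuRL} to obtain a regularised subhypergraph $\hat H$ with vertex partition $V_1\dcup\dots\dcup V_m$ and pair-partitions $\cP^{ij}=\{P^{ij}_\alpha\colon\alpha\in[\l]\}$ so that every triad is $\delta_3$-regular. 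Next split each $\cP^{ij}$ uniformly at random into three colour classes $\cP^{ij}_R,\cP^{ij}_B,\cP^{ij}_G$ of size about $\l/3$. The weak quasirandomness of $\hat H$ guarantees that the average relative density of triads over $\cP^{ij}\times\cP^{ik}\times\cP^{jk}$ is close to $d$, so a Chernoff/second-moment argument lets us fix one partition with the property that for every ordered triple $1\leq i<j<k\leq m$ and every colour pattern $(X,Y,Z)\in\{R,B,G\}^3$, a positive fraction of the $(\l/3)^3$ pattern-matched triads have relative density at least $d_3$. It then remains to locate indices $i_1<\dots<i_f$ in $[m]$ together with, for each $1\leq a<b\leq f$, a bipartite graph $P^{ab}\in\cP^{i_ai_b}_{c(\{v_a,v_b\})}$ such that for every hyperedge $\{v_a,v_b,v_c\}\in E(F)$ with $a<b<c$ the triad $P^{ab}\cup P^{ac}\cup P^{bc}$ has relative density at least $d_3$; property~\ref{zero:b} is exactly what guarantees that no single pair is forced into two different colour classes at once. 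Once such a configuration is found, Theorem~\ref{thm:EL} delivers a copy of $F$ in $\hat H\subseteq H$, and since $d>0$ was arbitrary we conclude $\pivvv(F)=0$.

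To produce this colour-matched configuration I would proceed in the spirit of Lemma~\ref{K4-modif}: encode the problem as embedding a colour-preserving copy of $F$ into an auxiliary $\binom{m}{2}$-partite $3$-uniform hypergraph $\cA$ whose vertex classes $\cP^{ij}$ are colour-partitioned and whose tripartite restrictions each contain many hyperedges in every one of the $27$ colour patterns, then iterate over the vertices $v_1,\dots,v_f$ of $F$. At each step a Ramsey colouring of the surviving index set in $[m]$ combined with a Cauchy--Schwarz projection of the type used in Section~\ref{sec:forcing} should pass to a sub-index set on which every not-yet-embedded hyperedge of $F$ still retains many valid colour-matched dense triads compatible with the bipartite graphs already chosen. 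The main obstacle will be this final iteration: for $F=K_4^{(3)-}$ it is handled by the two-page Lemma~\ref{K4-modif}, but a general $F$ satisfying~\ref{zero:b} can have arbitrarily complicated overlap structure among its hyperedges, so the induction must be set up so that after each round a positive proportion of the surviving cluster indices and of the admissible colour-matched bipartite graphs remains available while every colour-consistency constraint dictated by~$c$ is preserved. Organising this bookkeeping so that the Ramsey/Cauchy--Schwarz step can be repeated $f$ times without exhausting the reservoir is the heart of the matter; once it is arranged, the embedding lemma finishes the proof.
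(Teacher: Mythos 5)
Your first direction, \ref{zero:a}$\Rightarrow$\ref{zero:b}, is correct and matches the paper: the paper argues directly (if $\pivvv(F)=0$ then $F$ embeds into the random coloured hypergraph $H_\psi$, and the restriction of the order on $[n]$ and of $\psi$ to this copy witnesses~\ref{zero:b}), while you argue contrapositively (if \ref{zero:b} fails then $H_\chi$ is deterministically $F$-free, giving $\pivvv(F)\geq 1/27$); the two are the same observation.

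The other direction, \ref{zero:b}$\Rightarrow$\ref{zero:a}, is where your argument breaks down, and this is the substantive content: the paper explicitly states that this implication ``is the main part in Theorem~\ref{zero}'' and is proved by the regularity method in the separate paper~\cite{RRS-zero}; it is not reproduced here. Your sketch introduces a random tripartition of each $\cP^{ij}$ into colour classes and then asks for, among the colour-matched bipartite graphs $P^{ab}\in\cP^{i_a i_b}_{c(\{v_a,v_b\})}$, a simultaneous choice making every required triad dense. You correctly identify the crux --- iterating a Ramsey/Cauchy--Schwarz reduction over the $f$ vertices of $F$ while the colour-consistency constraints interact across hyperedges sharing a pair --- but you do not carry it out, and explicitly flag it as ``the heart of the matter.'' That is precisely the non-trivial combinatorial work; even for the single configuration $K_4^{(3)-}$ it takes the full Lemma~\ref{K4-modif} and Theorem~\ref{sync}, and for general $F$ satisfying~\ref{zero:b} the overlap structure of hyperedges can be arbitrary, so the reduction must be designed so that the reservoir of admissible bipartite graphs compatible with all previously chosen ones does not collapse after a bounded number of Ramsey passes. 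Without this argument the proof is incomplete. A secondary, more repairable issue is the claim that the random split of $\cP^{ij}$ makes ``a positive fraction'' of each colour-pattern-matched triad dense: the expected count is indeed $\Theta(\l^3)$, but the concentration argument must control the dependence coming from the fact that the same split of $\cP^{ij}$ is used for all $k$, and this needs to be stated and verified with a union bound over all $\binom{m}{3}$ index triples and all $27$ patterns.
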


Theorem~\ref{zero} has the following consequence, which shows that $\pivvv$ ``jumps'' from~$0$ to at least $1/27$. 
\begin{cor}\label{jump}
If a $3$-uniform hypergraph $F$ satisfies $\pivvv{(F)}>0$, then $\pivvv{(F)}\ge\tfrac1{27}$.
\end{cor}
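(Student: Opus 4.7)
The plan is to derive Corollary~\ref{jump} directly from Theorem~\ref{zero} by exhibiting a simple random construction whose density is exactly $1/27$. Suppose $\pivvv(F)>0$. The contrapositive of the implication \ref{zero:b}\,$\Longrightarrow$\,\ref{zero:a} of Theorem~\ref{zero} tells us that no enumeration $v_1,\dots,v_f$ of the vertices of $F$ admits a red/blue/green colouring of its pairs with the property that whenever $i<j<k$ and $\{v_i,v_j,v_k\}\in E(F)$, the pairs $\{v_i,v_j\}$, $\{v_i,v_k\}$, and $\{v_j,v_k\}$ are coloured red, blue, and green, respectively.

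First I would consider, for each $n$, the random $3$-uniform hypergraph $H_\phi$ on $[n]$ built from a uniformly random colouring $\phi\colon[n]^{(2)}\to\{r,b,g\}$ (the colours of distinct pairs chosen independently) by declaring $\{x,y,z\}$ with $x<y<z$ to be a hyperedge exactly when $\phi(x,y)=r$, $\phi(x,z)=b$, and $\phi(y,z)=g$. Since the three events involved are independent and each has probability $1/3$, every potential hyperedge appears with probability exactly $1/27$.

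Next I would check that for every fixed $\eta>0$ the hypergraph $H_\phi$ is weakly $(1/27,\eta)$-quasirandom with probability tending to $1$ as $n\to\infty$. For each fixed $U\subseteq[n]$ the random variable $e(U)$ has expectation $\binom{|U|}{3}/27$, and flipping the colour of a single pair alters $e(U)$ by at most $n-2$ (one potential hyperedge per extension vertex). McDiarmid's bounded-differences inequality applied to the $\binom{n}{2}$ independent colour variables then gives $\Pr\bigl[|e(U)-\binom{|U|}{3}/27|>\eta n^3\bigr]\leq\exp(-\Omega(\eta^2 n^2))$, and a union bound over the $2^n$ subsets of $[n]$ produces the desired uniform control. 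This concentration estimate is the only calculation in the argument and is entirely routine.

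Finally, and this is the conceptual heart, the construction is $F$-free regardless of the outcome of $\phi$: any hypothetical copy of $F$ inside $H_\phi$, with its vertices listed in increasing order $v_1<\dots<v_f$, would by the very definition of $H_\phi$ induce a red/blue/green colouring of the pairs of $F$ satisfying condition \ref{zero:b} of Theorem~\ref{zero}, contradicting our standing assumption. Hence for every $\eta>0$ and every sufficiently large $n$ an $F$-free, weakly $(1/27,\eta)$-quasirandom hypergraph on $n$ vertices exists, yielding $\pivvv(F)\geq\tfrac{1}{27}$. There is no genuine obstacle: the dovetailing between the random construction and the forbidden colouring pattern of Theorem~\ref{zero} is immediate, and the concentration step is handled cleanly by McDiarmid.
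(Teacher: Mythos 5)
Your proposal is correct and follows essentially the same route as the paper: the same random $3$-colouring construction $H_\psi$ of density $1/27$, the same observation that every copy of $F$ inside $H_\psi$ would witness condition~\ref{zero:b} of Theorem~\ref{zero}, and the same appeal to the contrapositive. The only difference is that you spell out the concentration step (McDiarmid plus a union bound over $2^n$ subsets), which the paper leaves as a routine assertion.
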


For the proof of Corollary~\ref{jump} we will display a weakly quasirandom hypergraph $H$
of density $1/27$, which only contains subhypergraphs satisfying condition~\ref{zero:b} 
of Theorem~\ref{zero} (in fact, it will be  universal for all such hypergraphs). Consequently, 
if $\pivvv(F)>0$, then by Theorem~\ref{zero} the hypergraph~$F$ fails to satisfy condition~\ref{zero:b}, whence it 
is not contained in~$H$ and, therefore, we have $\pivvv(F)\geq 1/27$.

The hypergraph $H$ will given by the following random construction: 
We consider a random three-colouring $\psi\colon [n]^{(2)}\to\{\textrm{red}\,,\textrm{blue}\,,\textrm{green}\}$ 
of the pairs of the first $n$ positive integers. For a given colouring $\psi$ we define the $3$-uniform 
hypergraph $H=H_{\psi}$ on the vertex set $[n]$, where we include the triple $\{i,j,k\}$ with $1\leq i<j<k\leq n$ 
in $E(H_\psi)$ if~$\psi(i,j)$ is $\textrm{red}$, $\psi(i,k)$ is $\textrm{blue}$, and $\psi(j,k)$ is $\textrm{green}$.
It follows that for any $\eta>0$  with high probability the random hypergraph $H_{\psi}$ is 
weakly $(1/27,\eta)$-quasirandom for sufficiently large $n$. Moreover, it follows from the 
construction that every subhypergraph of $H_\psi$ 
satisfies condition~\ref{zero:b} of Theorem~\ref{zero} and, hence, Corollary~\ref{jump} follows from Theorem~\ref{zero}.

We also note that if  $F$ satisfies $\pivvv(F)=0$, then by definition of $\pivvv$ the hypergraph $F$ is contained in any 
weakly quasirandom hypergraph of positive density and, in particular, $F\subseteq H_{\psi}$.
Hence, $\psi$ restricted to the vertices of this copy of~$F$
shows that~$F$  satisfies~\ref{zero:b} of Theorem~\ref{zero}, which establishes the 
implication~\ref{zero:a} $\Longrightarrow$~\ref{zero:b} of Theorem~\ref{zero}. 
The other implication is the main part in Theorem~\ref{zero} and is based on the regularity method for hypergraphs
and is the main result in~\cite{RRS-zero}.

\subsection{Two extensions of Theorem~\ref{K4-}}
We may suggest two extensions of the main result. Theorem~\ref{K4-} concerns the weakly quasirandom Tur\'an density for 
the hypergraphs~$K_4^{(3)-}$. This hypergraph consists of one apex vertex~$a$ whose link graph, i.e., the set of pairs that  
together with $a$ form a hyperedge in $K_4^{(3)-}$, is a triangle. It would be interesting 
the study the case, when the triangle is replaced by a larger clique. We discuss partial results addressing this problem 
in Section~\ref{sec:cr-cliquelink}.

For the second extension of Theorem~\ref{K4-} we consider $K_4^{(3)-}$ as a $3$-uniform hypergraph 
with three hyperedges on four vertices and, similarly, for $r\geq 3$ we may consider $r$-uniform hypergraphs with three edges on 
$(r+1)$-vertices. In fact, we established the quasirandom Tur\'an density for these hypergraphs, if the 
quasirandomness of the $r$-uniform hyperedges of~$H$ are distributed quasirandomly with respect to the 
the $(r-2)$-tuples of the vertex set (see Section~\ref{sec:cr-r}).

\subsubsection{Extending graph cliques to hypergraphs}\label{sec:cr-cliquelink}
We consider the following star-like $3$-uniform hypergraphs $S_k$. For $k\geq 3$ the hypergraph 
$S_k$ has vertex set $\{a,b_1,\dots,b_k\}$ and for all~$\binom{k}{2}$ pairs $1\leq i<j\leq k$ the triple 
$\{a,b_i,b_j\}$ is a hyperedge of $S_k$. We refer to the vertex $a$ which is contained in every hyperedge 
of $S_k$ as the \emph{apex vertex}. Clearly, for $k=3$ we have $S_3=K_4^{(3)-}$ and by Theorem~\ref{K4-}
we have $\pivvv(S_3)=1/4$. From this point of view the natural question asking to determine $\pivvv(S_k)$
for $k>3$ arises. We have partial results in this direction and begin our discussion with $S_4$.
\begin{thm}\label{S4}
	We have $\tfrac{1}{3}\leq \pivvv(S_4)\leq \tfrac{4}{9}$.
\end{thm}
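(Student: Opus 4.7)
For the upper bound $\pivvv(S_4)\le 4/9$, the plan is to follow the proof of Theorem~\ref{K4-} almost verbatim, replacing the final invocation of Theorem~\ref{thm:Fsync} for $k=3$ by the corresponding statement for $k=4$. Given $\eps>0$ and a weakly $(4/9+\eps,\eta)$-quasirandom hypergraph $H$ of sufficient size, I first apply Corollary~\ref{cor:TuRL} to obtain a cleaned subhypergraph $\hat H$ on a vertex partition $V_1\dcup\dots\dcup V_m$ together with bipartite partitions $\cP^{ij}$. By the embedding lemma (Theorem~\ref{thm:EL}), it then suffices to locate five indices $i_1<\dots<i_5$ in $[m]$, with $V_{i_5}$ designated as the apex class, together with bipartite graphs $P^{ab}\in\cP^{i_ai_b}$ for all $1\le a<b\le 5$, such that the six triads $P^{ab}P^{a5}P^{b5}$ with $1\le a<b\le 4$ are all dense and $\delta_3$-regular. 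The four triads among the non-apex classes may be ignored, since $S_4$ carries no hyperedge outside the apex star.

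The analogue of Lemma~\ref{K4-modif} required here is a direct adaptation of the $K_4^{(3)-}$ reduction. For every triple of indices $i<j<k$, the auxiliary tripartite $3$-uniform hypergraph $\cA^{ijk}$ of dense regular triads has density at least $4/9+\eps/4$ by the same quasirandomness computation used in Theorem~\ref{K4-}. A two-colour Ramsey argument on the index triples lets us pass to a subset of $m_*$ indices on which the apex position can be chosen consistently as either the minimum or the maximum, and a Cauchy-Schwarz projection --- exactly as in the proof of Lemma~\ref{K4-modif} --- yields, for the auxiliary $(m_*-1)$-partite graph $G$ on vertex classes $W_i$ corresponding to bipartite graphs through the apex, the mean-square degree bound
\[
\sum_{x\in W_i}d_{W_j}(x)^2 \ge \bigl(\tfrac{4}{9}+\eps'\bigr)|W_i|\,|W_j|^2 \qquad\text{for every $i<j$\,.}
\]
Since $\bigl((k-2)/(k-1)\bigr)^2=\tfrac{4}{9}$ when $k=4$, this is precisely the hypothesis of Theorem~\ref{thm:Fsync}, whose conclusion delivers a $K_4$ in $G$ and hence the desired $S_4$-configuration in $\hat H$.

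The lower bound $\pivvv(S_4)\ge \tfrac{1}{3}$ requires a separate probabilistic construction of a weakly $(1/3,o(1))$-quasirandom hypergraph whose every vertex-link is $K_4$-free. The random tournament of Example~\ref{ex:tournament} is $S_4$-free (since every vertex-link is bipartite, hence $K_4$-free), but its density is capped at $1/4$. To reach $1/3$ I would employ a richer random model whose vertex-links are $3$-chromatic rather than bipartite: for instance, a random $3$-colouring of the pairs combined with further independent per-pair random data, together with a symmetric hyperedge rule under which the link of every vertex inherits a canonical tripartition from the random data incident to that vertex. Weak quasirandomness then follows from standard concentration for sums of bounded functions of independent per-pair variables, and the $S_4$-freeness is immediate from the $K_4$-freeness of every link. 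The hardest step in executing this plan is the choice of symmetric rule: any purely linear rule on $\ZZ_3$-coloured pairs, such as ``hyperedge iff the three pair-colours sum to zero modulo $3$'', turns every vertex-link into a quasirandom graph of density $1/3$, which contains many copies of $K_4$ with high probability, so the rule must incorporate a non-linear structural feature that enforces tripartiteness of the links while preserving the global quasirandomness of the hypergraph.
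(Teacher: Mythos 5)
Your upper-bound argument is the paper's: follow the proof of Theorem~\ref{K4-}, replacing the invocation of Theorem~\ref{sync} by Theorem~\ref{thm:Fsync} with $k=4$ so that the threshold becomes $\bl\tfrac{k-2}{k-1}\br^2=\tfrac49$. The paper is itself terse here and supplies no more detail than you do, so that half of the theorem is in order (modulo the routine adaptation of Lemma~\ref{K4-modif}, which you outline correctly).

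The genuine gap is in the lower bound. You correctly diagnose the core difficulty --- that any linear rule on $\ZZ_3$-coloured pairs makes each link a quasirandom graph of density $1/3$ and hence full of copies of $K_4$, so a non-linear, order-sensitive rule is required whose structure canonically tripartitions every link --- but you stop at naming the obstacle without exhibiting the rule, which is the actual mathematical content. The paper supplies it: colour $[n]^{(2)}$ uniformly at random with $\{\textrm{red},\textrm{blue},\textrm{green}\}$, and declare $\{x,y,z\}$ with $x<y<z$ a hyperedge iff either $\psi(x,y)=\psi(y,z)\neq\psi(x,z)$, or the ordered triple $\bl\psi(x,y),\psi(x,z),\psi(y,z)\br$ equals one of the three \emph{cyclic} rainbow patterns $(\textrm{red},\textrm{blue},\textrm{green})$, $(\textrm{green},\textrm{red},\textrm{blue})$, $(\textrm{blue},\textrm{green},\textrm{red})$. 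This accepts $6+3=9$ of the $27$ ordered patterns, giving density $\tfrac13$; the dependence on the order of $x,y,z$ and the restriction to cyclic (rather than all six) rainbow patterns is exactly the non-linear structural feature you flagged as necessary. The explicit tripartition of the link $L_a$ is then given by the three sets $N^<_{\textrm{red}}(a)\dcup N^>_{\textrm{blue}}(a)$, $N^<_{\textrm{blue}}(a)\dcup N^>_{\textrm{green}}(a)$, and $N^<_{\textrm{green}}(a)\dcup N^>_{\textrm{red}}(a)$, where $N^<_c(a)$ (resp.\ $N^>_c(a)$) is the set of vertices preceding (resp.\ following) $a$ that are joined to $a$ in colour $c$. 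Checking that each of these three unions is independent in $L_a$ --- which is where the precise list of admissible patterns earns its keep --- is the verification your proposal leaves out, and without the concrete rule there is nothing to verify.
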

The upper bound can be proved along the lines of Theorem~\ref{K4-} by using 
Theorem~\ref{thm:Fsync} for~$K_4$ instead of Theorem~\ref{sync}.

The lower bound is given by the following construction. Again we consider a
random three-colouring $\psi\colon [n]^{(2)}\to\{\textrm{red}\,,\textrm{blue}\,,\textrm{green}\}$
of the pairs of the first $n$ positive integers. Given such a colouring $\psi$ we define a $3$-uniform hypergraph
$H=H_{\psi}$ on the vertex set $[n]$ containing those hyperedges $\{x,y,z\}$ with $x<y<z$ 
where the colour pattern of the three pairs $\{x,y\}$, $\{x,z\}$, and $\{y,z\}$ satisfies
\begin{enumerate}[label=\rmlabel]
	\item\label{it:S4i} $\psi(x,y)=\psi(y,z)\neq \psi(x,z)$, or
	\item\label{it:S4ii} the ordered colour pattern $(\psi(x,y)\,,\psi(x,z)\,,\psi(y,z))$ is 
		one of the three \emph{rainbow} patterns $(\textrm{red}\,,\textrm{blue}\,,\textrm{green})$,
		$(\textrm{green}\,,\textrm{red}\,,\textrm{blue})$, or $(\textrm{blue}\,,\textrm{green}\,,\textrm{red})$.
\end{enumerate}
Note that there are six patterns of the first kind and so in total for the hyperedges 
of~$H$ we allow
nine of the $27$ possible combinations. 
Standard probabilistic tail estimates
show for any $\eta>0$ that with high probability $H$ is weakly $(1/3,\eta)$-quasirandom 
provided $n$ is sufficiently large.

It is left to show that $H$ contains no copy of $S_4$. Supposing to the contrary, 
let $a\in[n]$ be the apex vertex of a copy of~$S_4$ in~$H$ and consider its monochromatic 
neighbourhoods with respect to $\psi$ , i.e., we set
\[
	N^<_{\textrm{red}}(a)=\{x<a\colon \psi(x,a)=\textrm{red}\}
	\qand
	N^>_{\textrm{red}}(a)=\{x>a\colon \psi(a,x)=\textrm{red}\}
\]
and similarly let $N^<_{\textrm{green}}(a)$, $N^>_{\textrm{green}}(a)$, $N^<_{\textrm{blue}}(a)$, and $N^>_{\textrm{blue}}(a)$
be defined for the other two colours. By definition these six sets partition the set $V_a=[n]\setminus\{a\}$.
We consider the \emph{link graph} $L_a$ of $a$ with vertex set $V_a$ where $\{u,v\}$ forms an edge if $\{a,u,v\}$ 
is a hyperedge of~$H$. Note that due to the allowed colour patterns in~\ref{it:S4i} 
and~\ref{it:S4ii} the six neighbourhood sets are independent sets in $L_a$. Moreover, one can check
that the three sets 
\[
	N^<_{\textrm{red}}(a)\dcup N^>_{\textrm{blue}}(a)\,,\quad
	N^<_{\textrm{blue}}(a)\dcup N^>_{\textrm{green}}(a)\,,\qand
	N^<_{\textrm{green}}(a)\dcup N^>_{\textrm{red}}(a)
\]
are also independent sets and partition $V_a$. In other words, the link graph $L_a$ is 3-colourable and, hence, it cannot
contain a copy of $K_4$. In particular, the vertex $a$ cannot be the apex vertex of a copy of $S_4$ in $H$.

For general $k$ we can prove 
\begin{equation}\label{Sk}
	\frac{k^2-5k+7}{(k-1)^2}
	\leq 
	\pivvv(S_k)
	\leq
	\left(\frac{k-2}{k-1}\right)^2\,.
\end{equation}
The upper bound follows like the proof for $k=4$ along the lines of Theorem~\ref{K4-}
with the generalisation of Theorem~\ref{sync} for the clique $K_k$ (see Theorem~\ref{thm:Fsync}).

For the lower bound we consider a random $(k-1)$-colouring of $\psi\colon [n]^{(2)}\to\{0,\dots,k-2\}$.
Similar as before the colour pattern we see on the pairs of three vertices $x<y<z$ determines if 
this triple forms a hyperedge of $H$. In the general case we allow the following patterns 
\begin{enumerate}[label=\rmlabel]
	\item\label{it:Ski} $\psi(x,y)=\psi(y,z)\neq\psi(x,z)$, or
	\item\label{it:Skii} the ordered colour pattern $(\psi(x,y)\,,\psi(x,z)\,,\psi(y,z))$ is 
		rainbow (i.e., all three colours are different), but not of the form
		$(i,j,i+1)$ for $i=0,\dots,k-2$ and $j\not\in\{i, i+1\}$, where addition is taken modulo $k-1$.
\end{enumerate}
This way of all different $(k-1)^3$ patterns we allow $(k-1)(k-2)$ patterns by part~\ref{it:Ski} of the definition and 
$(k-1)(k-2)(k-3)-(k-1)(k-3)=(k-1)(k-3)^2$ patterns 
in~\ref{it:Skii}. Hence, with high probability the hypergraph $H$ is weakly $(d,\eta)$-quasirandom 
for any fixed $\eta>0$ and 
\[
	d=\frac{(k-1)(k-2)+(k-1)(k-3)^2}{(k-1)^3}
	 =\frac{k^2-5k+7}{(k-1)^2}\,.
\]
Moreover, as above one can show that the link graph $L_a$ of every vertex $a\in[n]$ 
is $(k-1)$-colourable and, hence, it contains no $K_k$. In fact, with the similar notation as above 
it can be checked that the sets
\[
	N^<_{i}(a)\dcup N^>_{i+1}(a)\
\]
for $i=0,\dots,k-2$ form a partition of $[n]\setminus\{a\}$ into independent sets in $L_a$.
This establishes the lower bound of~\eqref{Sk}.

\subsubsection{Three $r$-tuples on $r+1$ vertices}\label{sec:cr-r}
In their concluding remarks from~\cite{GKV}, Glebov, Kr{\'a}{\v{l}}, and Volec 
suggested an analogue of Theorem~\ref{K4-} in the context of $r$-uniform hypergraphs. 
Instead of looking at $K_4^{(3)-}$ they propose to look at the $r$-uniform hypergraph $F^{(r)}$ on $r+1$ 
vertices with $3$ edges, so that a $r$-uniform hypergraph $H$ contains $F^{(r)}$ if and only if the link of some $(r-2)$-set of vertices contains a triangle. This is perfectly suited for the natural generalization of Example~1.3 to this context.

To keep the discussion simple we stick for now to the case $r=4$. Then one may start from a random directed $3$-uniform 
hypergraph $T^{(3)}_n$ with vertex set $[n]$ in which for any $3$-element subset $[n]$ one of its two {\it cyclic orientations} has been chosen at random with probabilities~$1/2$, all of these choices being made mutually independent. Then, we consider a $4$-element set to be a hyperedge of the corresponding $4$-uniform hypergraph $H(T^{(3)}_n)$ if and only if each of its 
two-element subsets are traversed by the two triples containing it in opposite directions. So~$\{w,x,y,z\}\in E\big(H(T^{(3)}_n)\big)$ happens for example in case 
$\overrightarrow{xyz}, \overrightarrow{xwy}, \overrightarrow{xzw}, \overrightarrow{ywz}\in E(T^{(3)}_n)$.
It is not hard to show that such a hypergraph $H(T^{(3)}_n)$ is $F^{(4)}$-free. 
Also, it is easily checked that this hypergraph has density $\tfrac18$ and is \emph{weakly quasirandom} (i.e., it has uniform hyperedge distribution with respect to sets of vertices). This means that in analogy with~\eqref{wqr-abc} we have for any $\eta>0$ that if $n$ is sufficiently large, then with high probability all sets $U_1$, $U_2$, $U_3$, and $U_4$ of vertices satisfy 
\[
e(U_1, U_2, U_3, U_4)=\tfrac18\,|U_1|\,|U_2|\,|U_3|\,|U_4|\pm \eta\,n^4\,,
\]
where $e(U_1, U_2, U_3, U_4)$ contains those $4$-tuples $(u_1,u_2,u_3,u_4)\in |U_1|\times|U_2|\times|U_3|\times|U_4|$ such that $\{u_1,u_2,u_3,u_4\}$ is a hyperedge 
of~$H(T^{(3)}_n)$.
This prompted the authors of~\cite{GKV} to conjecture that any weakly quasirandom $4$-uniform hypergraph $H$ with 
density $>\tfrac18$ contains a copy of $F^{(4)}$.

An interesting hypergraph described by Leader and Tan in a different context in~\cite{LeTa10}, however, shows that this is not the case, and that at least twice as much density is needed. Their construction starts from a random (graph) tournament $T_n$ on $n$ vertices as in Example~\ref{ex:tournament}. Depending on $T_n$, they define a directed $3$-uniform hypergraph $D^{(3)}_n$ by assigning the cyclic orientation to any $3$-element set $\{x,y,z\}$ in such a way that it coincides with the direction of the 
three arcs spanned by $\{x,y,z\}$ in $T_n$ either once or three times. So, e.g. if 
$\overrightarrow{xy}, \overrightarrow{yz}, \overrightarrow{zx}\in E(T_n)$, 
then 
$\overrightarrow{xyz}\in E(D^{(3)}_n)$, 
while if 
$\overrightarrow{xy}, \overrightarrow{xz}, \overrightarrow{yz}\in E(T_2)$, 
then $\overrightarrow{xzy}\in E(D^{(3)}_n)$. Now the $4$-uniform hypergraph $H(D^{(3)}_n)$ defined as in the previous paragraph but starting from $D^{(3)}_n$ rather than the random orientation $T^{(3)}_n$ is easily shown to have density about $1/4$. Moreover, it is weakly quasirandom and contains no copy of $F^{(4)}$.

In the light of this example, we propose a modification of the original question: it may be observed that the intended extremal example $H(T^{(3)}_n)$ satisfies stronger quasirandomness properties than $H(D^{(3)}_n)$ does. Notably, it behaves quasirandomly with respect to pairs, which  means that for any six graphs $G_{12}, \ldots, G_{34}$ on $[n]$ about $1/8$ of the quadruples $(x_1, x_2, x_3, x_4)$ with $\{x_1, x_2\}\in E(G_{12}), \ldots, \{x_3, x_4\}\in E(G_{34})$  satisfy $\{x_1, x_2, x_3, x_4\}\in E(H)$. One may also show directly that the hypergraph $H(D^{(3)}_n)$ lacks this property.

This may suggest that any $4$-uniform hypergraph with density $>\tfrac18$ that is quasirandom with respect to pairs in this sense does indeed contain a copy of $F^{(4)}$. More generally we show in~\cite{RRS-e} that an $r$-uniform hypergraph of density $>2^{1-r}$ that is quasirandom with respect to $(r-2)$-tuples has to contain $F^{(r)}$. The proof presented in~\cite{RRS-e}
relies on the regularity method for $r$-uniform hypergraphs and is considerably more intricate than the argument presented here.

We note that the case $r=2$ of this result might be viewed as the density version of Mantel's theorem.
Thus the ``three edge theorem'' in~\cite{RRS-e}
provides a common generalisation of Mantel's theorem and Theorem~\ref{K4-} to the context of $r$-uniform hypergraphs. 

\subsection*{Acknowledgement} The second author thanks 
Yoshi Kohayakawa and Endre Szemer\'edi for early discussions on the problem.

\begin{bibdiv}
\begin{biblist}
 
\bib{BaTa11}{article}{
   author={Baber, Rahil},
   author={Talbot, John},
   title={Hypergraphs do jump},
   journal={Combin. Probab. Comput.},
   volume={20},
   date={2011},
   number={2},
   pages={161--171},
   issn={0963-5483},
   review={\MR{2769186 (2012g:05166)}},
   doi={10.1017/S0963548310000222},
}
 
\bib{BolEx}{book}{
   author={Bollob{\'a}s, B{\'e}la},
   title={Extremal graph theory},
   series={London Mathematical Society Monographs},
   volume={11},
   publisher={Academic Press, Inc. [Harcourt Brace Jovanovich, Publishers],
   London-New York},
   date={1978},
   pages={xx+488},
   isbn={0-12-111750-2},
   review={\MR{506522 (80a:05120)}},
}

\bib{BSTT}{article}{
   author={Bondy, Adrian},
   author={Shen, Jian},
   author={Thomass{\'e}, St{\'e}phan},
   author={Thomassen, Carsten},
   title={Density conditions for triangles in multipartite graphs},
   journal={Combinatorica},
   volume={26},
   date={2006},
   number={2},
   pages={121--131},
   issn={0209-9683},
   review={\MR{2223630 (2007a:05062)}},
   doi={10.1007/s00493-006-0009-y},
}

\bib{CGW89}{article}{
   author={Chung, F. R. K.},
   author={Graham, R. L.},
   author={Wilson, R. M.},
   title={Quasi-random graphs},
   journal={Combinatorica},
   volume={9},
   date={1989},
   number={4},
   pages={345--362},
   issn={0209-9683},
   review={\MR{1054011 (91e:05074)}},
   doi={10.1007/BF02125347},
}

\bib{CHPS}{article}{
   author={Conlon, David},
   author={H{\`a}n, Hi{\^e}p},
   author={Person, Yury},
   author={Schacht, Mathias},
   title={Weak quasi-randomness for uniform hypergraphs},
   journal={Random Structures Algorithms},
   volume={40},
   date={2012},
   number={1},
   pages={1--38},
   issn={1042-9832},
   review={\MR{2864650}},
   doi={10.1002/rsa.20389},
}

\bib{Er64}{article}{
   author={Erd{\H{o}}s, P.},
   title={On extremal problems of graphs and generalized graphs},
   journal={Israel J. Math.},
   volume={2},
   date={1964},
   pages={183--190},
   issn={0021-2172},
   review={\MR{0183654 (32 \#1134)}},
}

\bib{Er77}{article}{
   author={Erd{\H{o}}s, Paul},
   title={Paul Tur\'an, 1910--1976: his work in graph theory},
   journal={J. Graph Theory},
   volume={1},
   date={1977},
   number={2},
   pages={97--101},
   issn={0364-9024},
   review={\MR{0441657 (56 \#61)}},
}

\bib{Er90}{article}{
   author={Erd{\H{o}}s, Paul},
   title={Problems and results on graphs and hypergraphs: similarities and
   differences},
   conference={
      title={Mathematics of Ramsey theory},
   },
   book={
      series={Algorithms Combin.},
      volume={5},
      publisher={Springer, Berlin},
   },
   date={1990},
   pages={12--28},
   review={\MR{1083590}},
}

\bib{ErHa72}{article}{
   author={Erd{\H{o}}s, P.},
   author={Hajnal, A.},
   title={On Ramsey like theorems. Problems and results},
   conference={
      title={Combinatorics (Proc. Conf. Combinatorial Math., Math. Inst.,
      Oxford, 1972)},
   },
   book={
      publisher={Inst. Math. Appl., Southend-on-Sea},
   },
   date={1972},
   pages={123--140},
   review={\MR{0337636 (49 \#2405)}},
}

\bib{ErSi66}{article}{
   author={Erd{\H{o}}s, P.},
   author={Simonovits, M.},
   title={A limit theorem in graph theory},
   journal={Studia Sci. Math. Hungar},
   volume={1},
   date={1966},
   pages={51--57},
   issn={0081-6906},
   review={\MR{0205876 (34 \#5702)}},
}


\bib{ErSo82}{article}{
   author={Erd{\H{o}}s, P.},
   author={S{\'o}s, Vera T.},
   title={On Ramsey-Tur\'an type theorems for hypergraphs},
   journal={Combinatorica},
   volume={2},
   date={1982},
   number={3},
   pages={289--295},
   issn={0209-9683},
   review={\MR{698654 (85d:05185)}},
   doi={10.1007/BF02579235},
}

\bib{ErSt46}{article}{
   author={Erd{\H{o}}s, P.},
   author={Stone, A. H.},
   title={On the structure of linear graphs},
   journal={Bull. Amer. Math. Soc.},
   volume={52},
   date={1946},
   pages={1087--1091},
   issn={0002-9904},
   review={\MR{0018807 (8,333b)}},
}

\bib{FrFu84}{article}{
   author={Frankl, P.},
   author={F{\"u}redi, Z.},
   title={An exact result for $3$-graphs},
   journal={Discrete Math.},
   volume={50},
   date={1984},
   number={2-3},
   pages={323--328},
   issn={0012-365X},
   review={\MR{753720 (85k:05063)}},
   doi={10.1016/0012-365X(84)90058-X},
}

\bib{FR}{article}{
   author={Frankl, Peter},
   author={R{\"o}dl, Vojt{\v{e}}ch},
   title={Extremal problems on set systems},
   journal={Random Structures Algorithms},
   volume={20},
   date={2002},
   number={2},
   pages={131--164},
   issn={1042-9832},
   review={\MR{1884430 (2002m:05192)}},
   doi={10.1002/rsa.10017.abs},
}

\bib{GKV}{article}{
   author={Glebov, Roman},
   author={Kr\'a{\v{l}}, Daniel},
   author={Volec, Jan},
   title={A problem of Erd\H os and S\'os on 3-graphs},
   journal={Israel J. Math.},
   volume={211},
   date={2016},
   number={1},
   pages={349--366},
   issn={0021-2172},
   review={\MR{3474967}},
   doi={10.1007/s11856-015-1267-4},
}

\bib{Gow06}{article}{
   author={Gowers, W. T.},
   title={Quasirandomness, counting and regularity for 3-uniform
   hypergraphs},
   journal={Combin. Probab. Comput.},
   volume={15},
   date={2006},
   number={1-2},
   pages={143--184},
   issn={0963-5483},
   review={\MR{2195580 (2008b:05175)}},
   doi={10.1017/S0963548305007236},
}

\bib{Ke11}{article}{
   author={Keevash, Peter},
   title={Hypergraph Tur\'an problems},
   conference={
      title={Surveys in combinatorics 2011},
   },
   book={
      series={London Math. Soc. Lecture Note Ser.},
      volume={392},
      publisher={Cambridge Univ. Press, Cambridge},
   },
   date={2011},
   pages={83--139},
   review={\MR{2866732}},
}

\bib{KNRS}{article}{
   author={Kohayakawa, Yoshiharu},
   author={Nagle, Brendan},
   author={R{\"o}dl, Vojt{\v{e}}ch},
   author={Schacht, Mathias},
   title={Weak hypergraph regularity and linear hypergraphs},
   journal={J. Combin. Theory Ser. B},
   volume={100},
   date={2010},
   number={2},
   pages={151--160},
   issn={0095-8956},
   review={\MR{2595699 (2011g:05215)}},
   doi={10.1016/j.jctb.2009.05.005},
}

\bib{LeTa10}{article}{
   author={Leader, Imre},
   author={Tan, Ta Sheng},
   title={Directed simplices in higher order tournaments},
   journal={Mathematika},
   volume={56},
   date={2010},
   number={1},
   pages={173--181},
   issn={0025-5793},
   review={\MR{2604992 (2011e:05101)}},
   doi={10.1112/S0025579309000539},
}

\bib{NPRS09}{article}{
   author={Nagle, Brendan},
   author={Poerschke, Annika},
   author={R{\"o}dl, Vojt{\v{e}}ch},
   author={Schacht, Mathias},
   title={Hypergraph regularity and quasi-randomness},
   conference={
      title={Proceedings of the Twentieth Annual ACM-SIAM Symposium on
      Discrete Algorithms},
   },
   book={
      publisher={SIAM, Philadelphia, PA},
   },
   date={2009},
   pages={227--235},
   review={\MR{2809322}},
}

\bib{P}{article}{
   author={Pfender, Florian},
   title={Complete subgraphs in multipartite graphs},
   journal={Combinatorica},
   volume={32},
   date={2012},
   number={4},
   pages={483--495},
   issn={0209-9683},
   review={\MR{2965288}},
   doi={10.1007/s00493-012-2425-5},
}

\bib{Ra07}{article}{
   author={Razborov, Alexander A.},
   title={Flag algebras},
   journal={J. Symbolic Logic},
   volume={72},
   date={2007},
   number={4},
   pages={1239--1282},
   issn={0022-4812},
   review={\MR{2371204 (2008j:03040)}},
   doi={10.2178/jsl/1203350785},
}

\bib{Ra10}{article}{
   author={Razborov, Alexander A.},
   title={On 3-hypergraphs with forbidden 4-vertex configurations},
   journal={SIAM J. Discrete Math.},
   volume={24},
   date={2010},
   number={3},
   pages={946--963},
   issn={0895-4801},
   review={\MR{2680226 (2011k:05171)}},
   doi={10.1137/090747476},
}

\bib{RRS-c}{article}{
   author={Reiher, Chr.},
   author={R{\"o}dl, V.},
   author={Schacht, M.},
   title={Embedding tetrahedra into quasirandom hypergraphs},
   journal={J. Combin. Theory Ser. B},
   volume={121},
   date={2016},
   pages={229--247},
   issn={0095-8956},
   review={\MR{3548293}},
   doi={10.1016/j.jctb.2016.06.008},
}

\bib{RRS-d}{article}{
	author={Reiher, Chr.}, 
	author={R{\"o}dl, V.},
	author={Schacht, M.},
	title={Some remarks on $\piee$}, 
	book={
		title={Connections in Discrete Mathematics: A Celebration of the Work of Ron Graham},
		publisher={Cambridge University Press},
		place={Cambridge},
		editor={Butler, S.},
		editor={Cooper, J.},
		editor={Hurlbert, G.},
	},
	eprint={1602.02299},
	note={To appear},
}

\bib{RRS-e}{article}{
	author={Reiher, Chr.}, 
	author={R{\"o}dl, V.},
	author={Schacht, M.},
	title={On a generalisation of Mantel's theorem to uniformly dense hypergraphs}, 
	journal={Int. Math. Res. Not. IMRN},
	note={To appear},
	doi={10.1093/imrn/rnx017},
}

\bib{RRS-zero}{article}{
	author={Reiher, Chr.}, 
	author={R{\"o}dl, V.},
	author={Schacht, M.},
	title={Hypergraphs with vanishing Tur\'an density in uniformly dense hypergraphs}, 
	journal={J. Lond. Math. Soc. (2)},
	note={To appear},
	eprint={1706.08873},
}

\bib{Ro86}{article}{
   author={R{\"o}dl, Vojt{\v{e}}ch},
   title={On universality of graphs with uniformly distributed edges},
   journal={Discrete Math.},
   volume={59},
   date={1986},
   number={1-2},
   pages={125--134},
   issn={0012-365X},
   review={\MR{837962 (88b:05098)}},
   doi={10.1016/0012-365X(86)90076-2},
}

\bib{RoSchRL}{article}{
   author={R{\"o}dl, Vojt{\v{e}}ch},
   author={Schacht, Mathias},
   title={Regular partitions of hypergraphs: regularity lemmas},
   journal={Combin. Probab. Comput.},
   volume={16},
   date={2007},
   number={6},
   pages={833--885},
   issn={0963-5483},
   review={\MR{2351688 (2008h:05083)}},
}
		
\bib{RoSchCL}{article}{
   author={R{\"o}dl, Vojt{\v{e}}ch},
   author={Schacht, Mathias},
   title={Regular partitions of hypergraphs: counting lemmas},
   journal={Combin. Probab. Comput.},
   volume={16},
   date={2007},
   number={6},
   pages={887--901},
   issn={0963-5483},
   review={\MR{2351689 (2008j:05238)}},
}

\bib{Sz78}{article}{
   author={Szemer{\'e}di, Endre},
   title={Regular partitions of graphs},
   language={English, with French summary},
   conference={
      title={Probl\`emes combinatoires et th\'eorie des graphes},
      address={Colloq. Internat. CNRS, Univ. Orsay, Orsay},
      date={1976},
   },
   book={
      series={Colloq. Internat. CNRS},
      volume={260},
      publisher={CNRS, Paris},
   },
   date={1978},
   pages={399--401},
   review={\MR{540024 (81i:05095)}},
}

\bib{Th87a}{article}{
   author={Thomason, Andrew},
   title={Pseudorandom graphs},
   conference={
      title={Random graphs '85},
      address={Pozna\'n},
      date={1985},
   },
   book={
      series={North-Holland Math. Stud.},
      volume={144},
      publisher={North-Holland, Amsterdam},
   },
   date={1987},
   pages={307--331},
   review={\MR{930498 (89d:05158)}},
}

\bib{Th87b}{article}{
   author={Thomason, Andrew},
   title={Random graphs, strongly regular graphs and pseudorandom graphs},
   conference={
      title={Surveys in combinatorics 1987},
      address={New Cross},
      date={1987},
   },
   book={
      series={London Math. Soc. Lecture Note Ser.},
      volume={123},
      publisher={Cambridge Univ. Press, Cambridge},
   },
   date={1987},
   pages={173--195},
   review={\MR{905280 (88m:05072)}},
}

\bib{Tu41}{article}{
   author={Tur{\'a}n, Paul},
   title={Eine Extremalaufgabe aus der Graphentheorie},
   language={Hungarian, with German summary},
   journal={Mat. Fiz. Lapok},
   volume={48},
   date={1941},
   pages={436--452},
   review={\MR{0018405 (8,284j)}},
}

\end{biblist}
\end{bibdiv}

\end{document}